\newtheorem{theorem}{Theorem}
\newtheorem{conjecture}{Conjecture}
\newtheorem{lemma}{Lemma}
\theoremstyle{remark}
\numberwithin{equation}{section}
\author{Yudong Liu and Xiaoxia Wang$^*$}
\address{Department of Mathematics\\
    Shanghai University \\
    Shanghai 200444, P.\:R.\:China}
\email{lydshdx@163.com (Y. Liu), xiaoxiawang@shu.edu.cn (X. Wang)}
\thanks{This work is supported by National Natural Science Foundations of China (11661032).}
\title[$q$-analogues of the  (G.2) Supercongruence of Van Hamme]{$q$-analogues of the  (G.2) Supercongruence of Van Hamme}
\subjclass[2010]{Primary 33D15; Secondary 11A07, 11B65}
\keywords{basic hypergeometric series; supercongruences; $q$-congruences; $q$-analogue; cyclotomic polynomial; }
\begin{document}

\begin{abstract}
Motivated by the recent research of congruences and $q$-congruences, we provide two different
$q$-analogues of the (G.2) supercongruence of Van Hamme through the `creative microscoping' method, which was devised by Guo and Zudilin. It is a remarkable fact that this is the first time to give direct $q$-analogues of (G.2). In addition, we propose a conjecture related to Swisher's Dwork-type supercongruence (G.3).

\end{abstract}

\maketitle

\section{Introduction}
\noindent In Ramanujan's first letter to Hardy in 1913, he announced that
(cf.\ \cite[p.~25, Equation~(2)]{BR})
\begin{align}
\sum_{k=0}^\infty(8k+1)\frac{(\frac{1}{4})_k^4}{k!^4}
=\frac{2\sqrt{2}}{\sqrt{\pi}\,\Gamma(\frac 34)^2} , \label{eq:4ram}
\end{align}
along with some similar hypergeometric identities, but he did not give any proofs.
Here  $(a)_n=a(a+1)\cdots(a+n-1)$ denotes the Pochhammer symbol and $\Gamma(x) $ is the Gamma function. The identity \eqref{eq:4ram} was ultimately proved by Hardy in \cite[p.~495]{Ha}.
In 1997, Van Hamme\cite{Hamme} proposed 13 mysterious $p$-adic analogues of Ramanujan-type $\pi$-formulas, such as,
\begin{align}
(\text{G.2})\quad\quad&\sum_{k=0}^{(p-1)/4}(8k+1)\frac{(\frac{1}{4})_k^4}{k!^4}
\equiv p\frac{\Gamma_p(\frac 12)\Gamma_p(\frac 14)}{\Gamma_p(\frac 34)}
\pmod{p^3}  \quad\quad p\equiv 1 \pmod 4.
\label{eq:pram}
\end{align}
Here and throughout this paper, $p$ is an odd prime and $\Gamma_p(x)$ is the $p$-adic Gamma
function~\cite{Mor}. Van Hamme\cite{Hamme} himself proved (C.2), (H.2) and (I.2). Later, Swisher\cite{Swisher} proved that the supercongruence \eqref{eq:pram} is true modulo $p^4$ for $p\equiv 1\pmod4$.

During the past few years, the Ramanujan-type congruences and supercongruences, which are viewed as the $p$-adic analogues of Ramanujan-type formulas,
have  caught attention of many authors (see \cite{GW,Guo2,Guo3,Guo4,Guo5,Guo6,Guo-t,Guo20,GuoZu,Guo21,WY0,WY,Zud2009,Long}).
Among them, Guo \cite{Guo2,Guo3,Guo4,Guo20,Guo6} and Guo and Wang \cite{GW} gave $q$-analogues of most of
Van Hamme's 13 conjectural supercongruences by using the $q$-WZ method. Guo and Zudilin \cite{GuoZu} introduced  the `creative microscoping' method
to prove and reprove many $q$-congruences. Wang and Yue\cite {WY} succeeded in proving a $q$-analogue of Van Hamme's supercongruence (A.2) for any prime $p\equiv 3 \pmod 4$.
A $q$-analogue of (A.2) for primes $p\equiv 1 \pmod 4$ was then given by Guo\cite{Guo5}.
However, no $q$-analogues of Van Hamme's (G.2) supercongruence have been found so far.

Recently, Guo and Schlosser  \cite[Theorems 2]{GS20} proved that, for even $d \ge 4$ and positive integer $n$ with $n\equiv-1 \pmod d$,
\begin{align}
\sum_{k=0}^{n-1}[2dk+1]\frac{(q;q^d)_k^d}{(q^d;q^d)_k^d}q^{\frac{d(d-3)k}{2}}
\equiv 0\pmod{\Phi_n(q)^2}, \label{eq:Guo1}
\end{align}
which is a $q$-analogue of the $p$-adic analogue of (\ref{eq:4ram}) for $p\equiv 3 \pmod 4$ when $d=4$.
Moreover, some other interesting $q$-congruences can be found in \cite{LP,NP,Tauraso2,Zudilin}.

In this paper, we shall give two different  $q$-analogues of the (G.2) supercongruence of Van Hamme.
\begin{theorem}\label{thm:1}
Let $n\equiv 1\pmod {4}$ be a positive integer. Then
\begin{align}
&\sum_{k=0}^{(n-1) / 4}[8k+1] \frac{\left(q ; q^{4}\right)_{k}^4}{\left(q^4 ; q^{4}\right)_k^4}q^{2k} \equiv \frac{\left(q^{2} ; q^{4}\right)_{(n-1)/{4}}}{\left(q^{4} ; q^{4}\right)_{(n-1)/4}}[n] q^{(1-n) / 4}  \pmod {[n]\Phi_{n}(q)^{2}}\label{eq;thm1_1};\\
&\sum_{k=0}^{n-1}[8k+1] \frac{\left(q ; q^{4}\right)_{k}^4}{\left(q^4 ; q^{4}\right)_k^4}q^{2k} \equiv \frac{\left(q^{2} ; q^{4}\right)_{(n-1)/{4}}}{\left(q^{4} ; q^{4}\right)_{(n-1)/4}}[n] q^{(1-n) / 4} \pmod {[n]\Phi_{n}(q)^{2}}.\label{eq;thm1_2}
\end{align}
\end{theorem}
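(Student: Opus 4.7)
The plan is to apply the creative microscoping method of Guo and Zudilin. I introduce a parameter $a$ and consider the parametric sum
\[
S_n(a) := \sum_{k=0}^{(n-1)/4}[8k+1]\frac{(aq;q^4)_k (q/a;q^4)_k (q;q^4)_k^2}{(aq^4;q^4)_k (q^4/a;q^4)_k (q^4;q^4)_k^2} q^{2k},
\]
which reduces to the left-hand side of \eqref{eq;thm1_1} at $a=1$. The target is the stronger parametric $q$-congruence
\[
S_n(a) \equiv \frac{(q^2;q^4)_{(n-1)/4}}{(q^4;q^4)_{(n-1)/4}}[n]q^{(1-n)/4} \pmod{(1-aq^n)(a-q^n)\Phi_n(q)}.
\]
Setting $a=1$ then yields the congruence modulo $(1-q^n)^2\Phi_n(q)$, which is stronger than modulo $[n]\Phi_n(q)^2$ since $[n]\Phi_n(q)^2$ divides $(1-q^n)^2\Phi_n(q)$ in $\mathbb{Z}[q,q^{-1}]$.

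I would establish the parametric congruence by proving divisibility by each of the three factors separately and then appealing to pairwise coprimality in the UFD $\mathbb{Z}[q,q^{-1}][a,a^{-1}]$. For the first two factors, specialize $a = q^{\mp n}$: the Pochhammer $(q^{1-n};q^4)_k$ then appears in the summand and vanishes for $k > (n-1)/4$, so $S_n(q^{\pm n})$ is a terminating very-well-poised $q$-hypergeometric series. I would evaluate these specializations in closed form by an appropriate classical summation---most likely a terminating $_{6}\phi_{5}$ sum or a balanced case of Jackson's terminating $_{8}\phi_{7}$ identity---and verify coincidence with the right-hand side at $a = q^{\pm n}$. For the third factor $\Phi_n(q)$, note that $[n]\equiv 0 \pmod{\Phi_n(q)}$ makes the right-hand side vanish, and I would match the left-hand side by the standard symmetry argument: pair the $k$-th summand with the $((n-1)/4-k)$-th one and use $q^n\equiv 1 \pmod{\Phi_n(q)}$ to produce cancellation.

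For the extended congruence \eqref{eq;thm1_2}, I would show that each summand with $(n+3)/4 \le k \le n-1$ is divisible by $[n]\Phi_n(q)^2$. This follows because $(q;q^4)_k^4$ with $k \ge (n+3)/4$ contains $(1-q^n)^4$, which dominates the denominator's $\Phi_n(q)$-adic valuation, so the extra terms contribute zero modulo $[n]\Phi_n(q)^2$ and \eqref{eq;thm1_2} reduces to \eqref{eq;thm1_1}.

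The main technical obstacle will be identifying precisely which classical terminating $q$-hypergeometric identity evaluates $S_n(q^{\pm n})$ in closed form matching the prescribed right-hand side; the very-well-poised structure imposed by the factor $[8k+1]$, the base $q^4$, and the squared-Pochhammer pattern constrain the choice, and verifying the match---including the subtle factor $q^{(1-n)/4}$ and the Pochhammer ratio $(q^2;q^4)_{(n-1)/4}/(q^4;q^4)_{(n-1)/4}$---is the crux of the argument.
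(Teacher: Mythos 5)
Your setup (the parametric sum $S_n(a)$, evaluation at $a=q^{\pm n}$ via a classical very-well-poised summation, a symmetry pairing for the cyclotomic factor, coprimality of the factors) is the creative-microscoping skeleton the paper also uses, and those pieces are sound: the specializations $S_n(q^{\pm n})$ are indeed summed in closed form (the paper does it with Rogers' nonterminating $_6\phi_5$), and the pairing $k\leftrightarrow (n-1)/4-k$ does give divisibility by $\Phi_n(q)$, since $(n-1)/4$ is exactly the residue $s$ with $4s\equiv-1\pmod n$. The genuine gap is in how you pass from the parametric congruence to the stated modulus $[n]\Phi_n(q)^2$. Your parametric congruence is only modulo $(1-aq^n)(a-q^n)\Phi_n(q)$, and the claim that setting $a=1$ ``yields the congruence modulo $(1-q^n)^2\Phi_n(q)$'' does not follow: what the factor $(1-aq^n)(a-q^n)$ gives you at $a=1$ is divisibility of the \emph{numerator} by $(1-q^n)^2$, and this survives in the reduced difference only to the extent that the denominators are coprime to the modulus. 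The denominators $(q^4;q^4)_k^4$, $k\le (n-1)/4$, are coprime to $\Phi_n(q)$, but for composite $n$ they are \emph{not} coprime to $[n]$: e.g.\ for $n=21$ the factor $1-q^{12}$ in $(q^4;q^4)_5$ is divisible by $\Phi_3(q)$. So the specialization $a\to1$ only delivers \eqref{eq;thm1_1} modulo $\Phi_n(q)^3$, which equals $[n]\Phi_n(q)^2$ when $n$ is prime but is strictly weaker for composite $n\equiv1\pmod4$, which the theorem also covers. The missing ingredient is a proof that the sum is divisible by $\Phi_m(q)$ for \emph{every} divisor $m\mid n$ with $m>1$; this is exactly what the paper's Lemmas 1 and 2 and the block argument supply, by evaluating the parametric summand at a primitive $m$-th root of unity, splitting the range into complete blocks of length $m$ plus a partial block ending at the residue $s_1$ with $4s_1\equiv-1\pmod m$, and showing each piece vanishes; this yields the stronger parametric congruence modulo $[n](1-aq^n)(a-q^n)$, from which $[n]\Phi_n(q)^2$ follows as the lcm of $[n]$ and $\Phi_n(q)^3$.

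The same issue infects your reduction of \eqref{eq;thm1_2} to \eqref{eq;thm1_1}: it is not true that each summand with $(n-1)/4<k\le n-1$ is divisible by $[n]\Phi_n(q)^2$. The numerator $(q;q^4)_k^4$ does contain $(1-q^n)^4$, so each extra term is $\equiv0\pmod{\Phi_n(q)^4}$ (the $\Phi_n(q)$-adic valuation of the denominator is $0$), but for a proper divisor $m\mid n$ the denominator can absorb all the $\Phi_m(q)$ factors of the numerator: for $n=21$, $m=3$, $k=6$, both $(q;q^4)_6^4$ and $(q^4;q^4)_6^4$ have $\Phi_3$-valuation $8$, so that term is not divisible by $\Phi_3(q)$ at all. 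The correct route, as in the paper, is to prove the $[n]$-divisibility of the \emph{whole} truncated sums for both ranges $M=(n-1)/4$ and $M=n-1$ by the root-of-unity argument, and use the termwise $\Phi_n(q)^4$ divisibility only to transfer the congruence modulo $\Phi_n(q)^3$ from one truncation to the other.
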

In fact, setting $n=p \equiv 1 \pmod 4$ and $q \rightarrow 1$ in $(\ref{eq;thm1_1})$, we get
\begin{equation}
\sum_{k=0}^{(p-1)/4}(8k+1)\frac{(\frac{1}{4})_k^4}{k!^4}
\equiv \frac{\left(\frac{1}{2}\right)_{(p-1) / 4}}{(1)_{(p-1) / 4}} p
\pmod {p^3}\label{eq;thm1_3}.
\end{equation}
For prime $p\ge 5$, the $p$-adic Gamma function $\Gamma_{p}$ has the following basic properties \cite{Long},
\begin{equation*}
\Gamma_{p}(1)=-1,\quad \Gamma_{p}(\frac{1}{2})^2=(-1)^{\frac{p+1}{2}},\quad
(a)_n=(-1)^n\frac{\Gamma_{p}(a+n)}{\Gamma_{p}(a)},
\end{equation*}
\begin{equation*}
\Gamma_{p}(a+bp)\equiv \Gamma_{p}(a)(1+G_1(a)bp) \pmod{p^2}, \quad
G_1(a)=G_1(1-a),
\end{equation*}
 where $G_{1}(a):=\Gamma_{p}'(a) / \Gamma_{p}(a)$.
 Then we can rewrite the right-hand side of $(\ref{eq;thm1_3})$ as
 \begin{align*}
 \frac{\left(\frac{1}{2}\right)_{(p-1) / 4}}{(1)_{(p-1) / 4}} p
 =\frac{\Gamma_{p}(1)\Gamma_{p}(\frac{1}{4}+\frac{p}{4})}{\Gamma_{p}(\frac{1}{2}) \Gamma_{p}(\frac{3}{4}+\frac{p}{4})} p
 &\equiv-\frac{\Gamma_{p}(\frac{1}{4})(1+G_1(\frac{1}{4})\frac{p}{4})}{\Gamma_{p}(\frac{1}{2}) \Gamma_{p}(\frac{3}{4})(1+G_1(\frac{3}{4})\frac{p}{4})} p \pmod{p^3}\\
 &=\frac{\Gamma_p(\frac 12)\Gamma_p(\frac 14)}{\Gamma_p(\frac 34)}p \pmod{p^3},
 \end{align*}
 which is just the right-hand side of Van Hamme's (G.2) supercongruence.

 Likewise, we have the following supercongruence as $q\to 1$ in \eqref{eq;thm1_2}:
 \begin{align*}
 \sum_{k=0}^{p-1}(8k+1)\frac{(\frac{1}{4})_k^4}{k!^4}
 \equiv p\frac{\Gamma_p(\frac 12)\Gamma_p(\frac 14)}{\Gamma_p(\frac 34)}
 \pmod{p^3}\quad\text{ $p\equiv 1\pmod{4}$},
 \end{align*}
 which is an equivalent form of \eqref{eq:pram}, since $(\frac{1}{4})_k/k!\equiv 0\pmod{p}$ for $(p-1)/4<k\leq  p-1$.


 \begin{theorem}\label{thm:2}
Let $n\equiv 1\pmod {4}$ be a positive integer. Then, modulo $ [n]_{q^2} \Phi_{n}(q^2)^2$,
\begin{align}
\sum_{k=0}^{(n-1) / 4}[8k+1]_{q^{2}}[8k+1]^{2} \frac{\left(q^{2} ; q^{8}\right)_{k}^{4}}{\left(q^{8} ; q^{8}\right)_{k}^{4}} q^{-4k }
&\equiv -\frac{2[n]_{q^{2}}(q^{4};q^{8})_{(n-1)/4}}{(1+q^{2})(q^{8};q^{8})_{(n-1)/4}}q^{(3-n)/2} ,\label{eq;thm2_1}  \\
\sum_{k=0}^{n-1}[8k+1]_{q^{2}}[8k+1]^{2} \frac{\left(q^{2} ; q^{8}\right)_{k}^{4}}{\left(q^{8} ; q^{8}\right)_{k}^{4}} q^{-4 k}
&\equiv -\frac{2[n]_{q^{2}}(q^{4};q^{8})_{(n-1)/4}}{(1+q^{2})(q^{8};q^{8})_{(n-1)/4}}q^{(3-n)/2}.\label{eq;thm2_2}
\end{align}
\end{theorem}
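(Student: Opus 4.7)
The plan is to adapt the creative microscoping technique of Guo and Zudilin. I introduce an extra parameter $a$ and deform the left-hand side of \eqref{eq;thm2_1} to
$$
L_n(a,q) := \sum_{k=0}^{(n-1)/4} [8k+1]_{q^2}[8k+1]^2\,\frac{(aq^2, q^2/a, q^2, q^2; q^8)_k}{(aq^8, q^8/a, q^8, q^8; q^8)_k}\,q^{-4k},
$$
so that $L_n(1,q)$ is the target left-hand side. In parallel I set up a parametric right-hand side $R_n(a,q)$, obtained by inserting $a$ into the closed-form $q$-product in a compatible way so that $R_n(1,q)$ matches the target right-hand side. The objective is to prove
$$
L_n(a,q) \equiv R_n(a,q) \pmod{\Phi_n(q^2)\,(1-aq^{2n})(a-q^{2n})},
$$
and then combine this with a separate $[n]_{q^2}/\Phi_n(q^2)$-divisibility assertion.

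The first step is the closed-form evaluation at $a = q^{\pm 2n}$. When $a = q^{-2n}$, the factor $(aq^2;q^8)_k = (q^{2-2n};q^8)_k$ contributes zeros among the indices of summation, forcing the series to terminate earlier, and I would recast the truncated sum as a terminating very-well-poised basic hypergeometric series to which Watson's $_8\phi_7$ transformation, followed by an appropriate summation (for example $q$-Dixon or Jackson's terminating summation), can be applied. A direct comparison with $R_n(q^{-2n},q)$ then yields congruence modulo $\Phi_n(q^2)$, and the case $a = q^{2n}$ follows by the $a \mapsto 1/a$ symmetry of the summand. Since $1-aq^{2n}$, $a-q^{2n}$ and $\Phi_n(q^2)$ are pairwise coprime in the ambient polynomial ring, these two evaluations together with the reduction modulo $\Phi_n(q^2)$ produce the displayed parametric congruence; specializing $a=1$ gives divisibility by $\Phi_n(q^2)(1-q^{2n})^2$, which contains $\Phi_n(q^2)^3$.

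To upgrade to the full modulus $[n]_{q^2}\Phi_n(q^2)^2$, it remains to show $L_n(1,q) \equiv R_n(1,q) \pmod{\Phi_d(q^2)}$ for every divisor $d\mid n$ with $1 < d < n$. This I would establish by the standard term-pairing argument for very-well-poised sums: modulo $\Phi_d(q^2)$ we have $q^{2d}\equiv 1$, which induces a symmetry on the summand forcing the pair sums to vanish. Combining all coprime cyclotomic factors yields \eqref{eq;thm2_1}. Finally, \eqref{eq;thm2_2} follows from \eqref{eq;thm2_1} by a tail-vanishing argument: for $(n+3)/4 \le k \le n-1$ the numerator factor $(q^2;q^8)_k^4$ contributes sufficiently many copies of $\Phi_n(q^2)$ to absorb the modulus $[n]_{q^2}\Phi_n(q^2)^2$.

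The main obstacle will be the closed-form $a = q^{\pm 2n}$ evaluation. The weighting $[8k+1]_{q^2}[8k+1]^2$, whose $q \to 1$ limit is $(8k+1)^3$, is cubic in nature and does not fit directly into a single standard very-well-poised $_8\phi_7$ frame. A likely technical device is to split this weight via the identity $[8k+1]_{q^2} = [8k+1]\,(1+q^{8k+1})/(1+q)$, or alternatively to introduce a second auxiliary parameter $b$ and differentiate with respect to it at $b=1$, in order to reduce the evaluation to a known terminating $q$-hypergeometric summation.
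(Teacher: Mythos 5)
Your overall strategy coincides with the paper's: introduce the parameter $a$ in exactly the same way, prove a parametric congruence by evaluating at $a=q^{\pm 2n}$, handle the $[n]_{q^2}$ part by root-of-unity pairing, and deduce \eqref{eq;thm2_2} by a tail argument. But the step you defer as "the main obstacle" is precisely the engine of the proof, and your diagnosis of it is wrong: the cubic weight $[8k+1]_{q^2}[8k+1]^2$ \emph{does} fit a single standard very-well-poised series. Taking base $q^8$ with $a=d=q^2$, $b=c=q^9$, $e=q^{2+2n}$, $f=q^{2-2n}$, the very-well-poised factor $(1-q^{2+16k})/(1-q^2)$ gives $[8k+1]_{q^2}$, while each of the two pairs $b,c=q^9$ (with $aq/b=aq/c=q$) contributes a factor $[8k+1]$, and the argument works out to $q^{-4}$; this is exactly the ${}_8\phi_7$ in the paper's proof of Theorem~\ref{thm:4}. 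Watson's transformation \eqref{eq:8phi7} then converts it into a ${}_4\phi_3$ whose top parameter is $aq/bc=q^{-8}$ (in base $q^8$), so the series terminates after two terms; no $q$-Dixon, Jackson summation, or differentiation with respect to an auxiliary parameter is needed, and none of those devices is shown to lead anywhere in your sketch. A consequence you have not anticipated is that the correct parametric right-hand side is \emph{not} obtained by "inserting $a$ into a closed-form $q$-product": it is a product times the two-term factor $\bigl(1-\frac{(1-aq^{2})(1-q^{2}/a)}{(1-q)^{2}(1+q^{2})}\bigr)$, as in \eqref{eq;thm4_1}, which only collapses to the stated right-hand side of \eqref{eq;thm2_1} at $a=1$ via $1-\frac{(1-q^2)^2}{(1-q)^2(1+q^2)}=\frac{-2q}{1+q^2}$. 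Without this explicit $R_n(a,q)$ your congruences at $a=q^{\pm2n}$ and modulo $\Phi_n(q^2)$ have nothing concrete to be compared against, so the proposal as written does not yet constitute a proof.

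Two smaller points. First, your cyclotomic bookkeeping (parametric congruence modulo $\Phi_n(q^2)(1-aq^{2n})(a-q^{2n})$, then $\Phi_d(q^2)$ for proper divisors $d$ at $a=1$) is a legitimate rearrangement of the paper's argument, which instead proves the parametric sum $\equiv 0 \pmod{[n]_{q^2}}$ in one stroke; either works, provided the pairing argument is carried out with the block-and-ratio-limit device, since for $k\ge d$ the individual terms at a $d$-th root of unity are $0/0$-type and cannot simply be evaluated. Second, your derivation of \eqref{eq;thm2_2} is incomplete as stated: the tail terms for $(n+3)/4\le k\le n-1$ are divisible by high powers of $\Phi_n(q^2)$, but powers of $\Phi_n(q^2)$ cannot "absorb" the factors $\Phi_d(q^2)$ with $1<d<n$ contained in $[n]_{q^2}$; you also need the root-of-unity vanishing of the full sum $\sum_{k=0}^{n-1}$ modulo each $\Phi_d(q^2)$ (the same pairing argument supplies it, and this is why the paper proves its divisibility statement for both truncations $M=(n-1)/4$ and $M=n-1$).
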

Leting $n=p$ and $q\rightarrow -1$ in \eqref{eq;thm2_1}, we obtain (G.2) once more. Further,
we have the following similar supercongruences by taking $q\rightarrow 1$ in Theorem \ref{thm:2}:
\begin{align*}
\sum_{k=0}^{(p-1)/4}(8k+1)^3\frac{(\frac{1}{4})_k^4}{k!^4}
&\equiv- \frac{\Gamma_p(\frac 12)\Gamma_p(\frac 14)}{\Gamma_p(\frac 34)}p \pmod{p^3}, \\
\sum_{k=0}^{p-1}(8k+1)^3\frac{(\frac{1}{4})_k^4}{k!^4}
&\equiv -\frac{\Gamma_p(\frac 12)\Gamma_p(\frac 14)}{\Gamma_p(\frac 34)}p \pmod{p^3}.
\end{align*}

As for prerequisites, the reader is expected to know the standard $q$-notation.
For an indeterminate $q$,
$(a;q)_n=(1-a)(1-aq)\cdots (1-aq^{n-1})$
is called the {\em $q$-shifted factorial}.
For convenience, we compactly write
$(a_1,a_2,\ldots,a_m;q)_n=(a_1;q)_n (a_2;q)_n\cdots (a_m;q)_n$
for the product of $q$-shifted factorials.
Moreover, $\Phi_n(q)$ denotes the $n$-th {\em cyclotomic polynomial} in $q$, which is defined as
\begin{align*}
\Phi_n(q)=\prod_{\substack{1\leqslant k\leqslant n\\ \gcd(n,k)=1}}(q-\zeta^k),
\end{align*}
where $\zeta$ is an $n$-th primitive root of unity. Furthermore, for arbitrary integer $n$,
$[n]=[n]_q=(1-q^n)/(1-q)$ is the {\em $q$-integer}.

The rest of the paper is organized as follows. 
We  shall prove Theorems \ref{thm:1} and \ref{thm:2}  based on Rogers' nonterminating $_6\phi_5$ summation and Watson's $_8\phi_7$ transformation in the Sections 2 and 3.
Certain generalizations of Theorems \ref{thm:1}  and \ref{thm:2} will be given in Section 4.
Finally, in Section 5, we will propose a $q$-analogue of Swisher's Dwork-type conjecture supercongruence (G.3) with $p\equiv1 \pmod 4$.


\section{Proof of Theorem 1 }\label{sec:thm1}
We start with Rogers' nonterminating $_6\phi_5$ summation (cf.\
\cite[Appendix~(II.20)]{GR}):
\begin{equation}\label{eq:6phi5}
{}_{6}\phi_5\!\left[\begin{array}{cccccc}
a,& qa^{\frac{1}{2}},& -qa^{\frac{1}{2}}, & b,    & c,    & d \\
& a^{\frac{1}{2}}, & -a^{\frac{1}{2}},  & aq/b, & aq/c, & aq/d
\end{array};q,\, \frac{aq}{bcd}
\right]
=\frac{(aq, aq/bc,aq/bd,aq/cd;q)_\infty}
{(aq/b,aq/c,aq/d,aq/bcd;q)_\infty} ,
\end{equation}
where $|aq/bcd|<1$ for convergence.

Also, the following lemmas are needed in our proof.

\begin{lemma}\label{lem:one}
Let $d\ge 2$, $m>1$,  $0\le s \le m-1$, $t$ be integers with $\gcd(d,t)=1$ and $ds\equiv -t \pmod m$. Then, for $0\le k\le s$, we have
\begin{equation}
\frac{\left(a q^{ t} ; q^{ d}\right)_{s-k}}{\left(q^{ d} / a ; q^{ d}\right)_{s-k}} \equiv(-a)^{s-2 k} q^{s(d s-d+2 t)/2+(d-t) k} \frac{\left(a q^{ t} ; q^{ d}\right)_{k}}{\left(q^{ d} / a ; q^{ d}\right)_{k}} \quad \pmod {\Phi_{m}\left(q\right)}\label{eq;Lemma1}.
\end{equation}

\end{lemma}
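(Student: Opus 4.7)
The plan is to write both length-$(s-k)$ products on the left as length-$s$ products divided by length-$k$ tails, and then reduce the tails and the length-$s$ ratio modulo $\Phi_m(q)$ using the hypothesis $ds\equiv -t\pmod m$, which is equivalent to $q^{ds+t}\equiv 1\pmod{\Phi_m(q)}$. Concretely, the starting splittings are
\begin{equation*}
(aq^t;q^d)_{s-k}=\frac{(aq^t;q^d)_s}{(aq^{t+(s-k)d};q^d)_k},\qquad (q^d/a;q^d)_{s-k}=\frac{(q^d/a;q^d)_s}{(q^{(s-k+1)d}/a;q^d)_k}.
\end{equation*}

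For the numerator tail, the substitution $q^{sd+t}\equiv 1$ reduces each factor to the form $1-aq^{-id}$ with $1\le i\le k$, and the reflection $1-aq^{-id}=-aq^{-id}(1-q^{id}/a)$ converts the tail into $(-a)^k q^{-dk(k+1)/2}(q^d/a;q^d)_k$. A mirror calculation on the denominator tail, using $q^{sd}\equiv q^{-t}$ together with $1-q^{-t-id}/a=-(1-aq^{t+id})/(aq^{t+id})$ for $0\le i\le k-1$, yields $(-1)^k a^{-k} q^{-tk-dk(k-1)/2}(aq^t;q^d)_k$. Taking the quotient, the $(-a)^{\pm k}$ factors combine to $a^{-2k}$, the two triangular-number powers of $q$ collapse to $q^{(d-t)k}$, and a factor $(aq^t;q^d)_k/(q^d/a;q^d)_k$ appears, so that
\begin{equation*}
\frac{(aq^t;q^d)_{s-k}}{(q^d/a;q^d)_{s-k}}\equiv a^{-2k}q^{(d-t)k}\cdot\frac{(aq^t;q^d)_s}{(q^d/a;q^d)_s}\cdot\frac{(aq^t;q^d)_k}{(q^d/a;q^d)_k} \pmod{\Phi_m(q)}.
\end{equation*}

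The remaining length-$s$ ratio is evaluated by the same reflection trick: reversing the full product of length $s$ and applying $q^{sd+t}\equiv 1$ gives $(aq^t;q^d)_s\equiv (-a)^s q^{-ds(s+1)/2}(q^d/a;q^d)_s\pmod{\Phi_m(q)}$. The target exponent $s(ds-d+2t)/2$ differs from $-ds(s+1)/2$ by $s(ds+t)$, which is a multiple of $m$; since $q^m\equiv 1\pmod{\Phi_m(q)}$, this discrepancy is harmless. Combining with $(-a)^s a^{-2k}=(-a)^{s-2k}$ reproduces exactly \eqref{eq;Lemma1}.

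The main obstacle is purely bookkeeping: there are four triangular-number exponents of $q$ (one from each tail and one from the length-$s$ reversal, plus the accounting on the denominator side) and several $\pm a$ signs to keep straight. The subtle point is that the final exponent agrees with the statement only modulo $m$, so the reduction $q^m\equiv 1\pmod{\Phi_m(q)}$ has to be invoked at the very end to bring the natural output $-ds(s+1)/2$ into the stated form $s(ds-d+2t)/2$.
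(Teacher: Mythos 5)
Your argument is correct and is essentially the paper's own proof: the same decomposition of the length-$(s-k)$ ratio into the length-$s$ ratio times two length-$k$ tails, the same reduction via $q^{ds+t}\equiv 1\pmod{\Phi_m(q)}$, and the same reflection $1-xq^{-N}=-xq^{-N}(1-q^{N}/x)$ to pair factors. The only (harmless) cosmetic difference is that you reduce the numerator of the length-$s$ ratio instead of its denominator, so your exponent $-ds(s+1)/2$ matches the stated $s(ds-d+2t)/2$ only after the extra observation that they differ by $s(ds+t)\equiv 0\pmod m$, whereas the paper's substitution yields the stated exponent directly.
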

\begin{proof}
Since $q^m\equiv 1 \pmod {\Phi_{m}(q)}$, we have
\begin{align*}
\begin{aligned}
\frac{\left(a q^{ t} ; q^{ d}\right)_{s}}{\left(q^{ d} / a ; q^{ d}\right)_{s}} &=\frac{\left(1-a q^{ t}\right)\left(1-a q^{ t+ d}\right) \cdots\left(1-a q^{t+d s- d}\right)}{\left(1-q^{ d} / a\right)\left(1-q^{2 d} / a\right) \cdots\left(1-q^{ d s} / a\right)} \\
& \equiv \frac{\left(1-a q^{t}\right)\left(1-a q^{t+d }\right) \cdots\left(1-a q^{t+ d s- d}\right)}{\left(1-q^{ d- d s- t} / a\right)\left(1-q^{2 d- d s- t} / a\right) \cdots\left(1-q^{- t} / a\right)} \\
&=(-a)^{s} q^{s(2t+d s-d)/2}\quad \pmod {\Phi_{m}(q)}.
\end{aligned}
\end{align*}
 For $0\le k \le s$, we obtain
\begin{align*}
\frac{\left(a q^{ t} ; q^{ d}\right)_{s-k}}{\left(q^{ d} / a ; q^{ d}\right)_{s-k}}
&=\frac{\left(a q^{t} ; q^{ d}\right)_{s}}{\left(q^{ d} / a ; q^{ d}\right)_{s}} \frac{\left(1-q^{d s- (k-1) d} / a\right) \cdots\left(1-q^{ d s} / a\right)}{\left(1-a q^{ d s-d k+ t}\right) \cdots\left(1-a q^{ d s- d+ t}\right)}\\
&\equiv\frac{\left(a q^{ t} ; q^{d}\right)_{s}}{\left(q^{ d} / a ; q^{ d}\right)_{s}} \frac{\left(1-q^{-d k+ d- t} / a\right) \cdots\left(1-q^{- t} / a\right)}{\left(1-a q^{- d k}\right) \cdots\left(1-a q^{- d}\right)}\quad \pmod {\Phi_{m}(q)}\\
&\equiv(-a)^{s-2 k} q^{s(d s-d+2 t)/2+(d-t) k} \frac{\left(a q^{ t} ; q^{ d}\right)_{k}}{\left(q^{ d} / a ; q^{ d}\right)_{k}} \quad \pmod {\Phi_{m}(q)}
\end{align*}
as desired.
\end{proof}

\begin{lemma}\label{lem:two}
Let $m>1$, $d\ge 2$, $t$ be integers with $\gcd(d,m)=1$  and $\gcd(d,t)=1$. Then
\begin{align}
\sum_{k=0}^{m-1}[2dk+t] \frac{\left(q^t ; q^{d}\right)_{k}^2(aq^t;q^d)_k(q^t/a;q^d)_k}{\left(q^{d }; q^{d}\right)_k^2(aq^d;q^d)_k(q^d/a;q^d)_k}q^{(d-2t)k} \equiv 0 \pmod {\Phi_{m}(q)}\label{eq;lemma2_1}.
\end{align}
\end{lemma}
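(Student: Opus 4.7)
My strategy is to pair summands, using Lemma~\ref{lem:one} to relate the $k$-th and $(s-k)$-th terms. Since $\gcd(d,m)=1$, there is a unique $s\in\{0,1,\ldots,m-1\}$ with $ds\equiv-t\pmod m$. Let $T_k$ denote the $k$-th summand on the left-hand side of (\ref{eq;lemma2_1}). For $k\ge s+1$, the factor $(q^t;q^d)_k$ contains $1-q^{t+ds}$, which is divisible by $\Phi_m(q)$; hence $T_k\equiv 0\pmod{\Phi_m(q)}$, and the problem reduces to showing $\sum_{k=0}^{s}T_k\equiv 0\pmod{\Phi_m(q)}$.

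To carry this out I would invoke Lemma~\ref{lem:one} four times in total: twice with its parameter $a$ set to $1$, applied to $(q^t;q^d)_{s-k}/(q^d;q^d)_{s-k}$ which appears squared in $T_{s-k}$; once with $a$ kept as it is, applied to $(aq^t;q^d)_{s-k}/(q^d/a;q^d)_{s-k}$; and once with $a$ replaced by $1/a$, applied to $(q^t/a;q^d)_{s-k}/(aq^d;q^d)_{s-k}$. The sign and $a$-dependent factors collapse via $(-a)^{s-2k}(-1/a)^{s-2k}=1$, leaving only powers of $q$. Separately, $ds\equiv-t\pmod m$ yields $1-q^{2d(s-k)+t}\equiv -q^{-t-2dk}(1-q^{t+2dk})\pmod{\Phi_m(q)}$ and hence $[2d(s-k)+t]\equiv -q^{-t-2dk}[2dk+t]\pmod{\Phi_m(q)}$.

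Combining these ingredients, I expect to reach $T_{s-k}\equiv -T_k\pmod{\Phi_m(q)}$. When $s$ is odd, every pair $(T_k,T_{s-k})$ with $0\le k<(s+1)/2$ sums to zero, finishing the argument. When $s$ is even, the middle term $T_{s/2}$ vanishes directly modulo $\Phi_m(q)$ because it carries the factor $[ds+t]$, so no division by $2$ is required; the pairs on either side again sum to zero.

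The main obstacle is the exponent bookkeeping: one must verify that the total power of $q$ produced by the four applications of Lemma~\ref{lem:one}, combined with the correction from $[2d(s-k)+t]$ and the prefactors $q^{(d-2t)(s-k)}$ in $T_{s-k}$ and $q^{(d-2t)k}$ in $T_k$, reduces to $0$ modulo $m$. After collecting, the $k$-coefficient cancels exactly and the remaining constant $-t+2ds^2-ds+2st$ is $\equiv 0\pmod m$ via $ds\equiv-t$ (which gives $ds^2\equiv -ts$). This mechanical but delicate check is the crux; once it is done, the pairing conclusion follows formally.
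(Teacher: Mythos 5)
Your proposal is correct and follows essentially the same route as the paper: pair the $k$-th and $(s-k)$-th terms via Lemma~\ref{lem:one} (applied with $a=1$ twice, with $a$, and with $1/a$) to get $T_{s-k}\equiv -T_k\pmod{\Phi_m(q)}$, treat even $s$ through the middle factor $[ds+t]\equiv 0$, and kill the terms with $s<k\le m-1$ via the $\Phi_m(q)$-divisible factor $(q^t;q^d)_k/(q^d;q^d)_k$; your exponent check, reducing to $-t+2ds^2-ds+2st\equiv 0\pmod m$, is exactly the bookkeeping implicit in the paper's argument.
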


\begin{proof}
Since $\gcd(d,m)=1$, there exists a unique integer $s$ with $0\le s\le m-1$ and $ds\equiv -t \pmod m $. Applying Lemma 1, for $0 \le k \le s$, we have
\begin{align*}
&[2d(s-k)+t]\frac{\left(q^t ; q^{d}\right)_{s-k}^2(aq^t;q^d)_{s-k}(q^t/a;q^d)_{s-k}}{\left(q^{d }; q^{d}\right)_{s-k}^2(aq^d;q^d)_{s-k}(q^d/a;q^d)_{s-k}}q^{(d-2t)(s-k)}\\
&\equiv -[2dk+t] \frac{\left(q^t ; q^{d}\right)_{k}^2(aq^t;q^d)_k(q^t/a;q^d)_k}{\left(q^{d }; q^{d}\right)_k^2(aq^d;q^d)_k(q^d/a;q^d)_k}q^{(d-2t)k}  \pmod {\Phi_{m}(q)}.
\end{align*}
Hence, if $s$ is odd, then we get
\begin{align}
\sum_{k=0}^{s}[2dk+t] \frac{\left(q^t ; q^{d}\right)_{k}^2(aq^t;q^d)_k(q^t/a;q^d)_k}{\left(q^{d }; q^{d}\right)_k^2(aq^d;q^d)_k(q^d/a;q^d)_k}q^{(d-2t)k} \equiv 0 \pmod {\Phi_{m}(q)}\label{eq;lemma2_2}.
\end{align}
On the other hand, if $s$ is even, then the middle term of (\ref{eq;lemma2_2}) contains the factor $[2d(\frac{s}{2})+t]=[ds+t]$,
which is congruent to $0$ modulo $\Phi_{m}(q)$. Then we arrive at (\ref{eq;lemma2_2})  for $0\le s \le m-1$.
Furthermore, since $(q^t;q^d)_k/(q^d;q^d)_k\equiv 0 \pmod {\Phi_{m}(q)}$ for $s< k \le m-1$, we directly obtain  (\ref{eq;lemma2_1}).
This completes the proof of the lemma.
\end{proof}

We now present the following parametric generalization of Theorem \ref{thm:1}.
\begin{theorem}\label{thm:3}
Let $n\equiv 1\pmod {4}$ be a positive integer. For any indeterminate $a$,  modulo $[n](1-aq^n)(a-q^n)$, we have
\begin{equation}
\sum_{k=0}^{(n-1)/4}[8k+1] \frac{\left(q ; q^{4}\right)_{k}^2(aq;q^4)_k(q/a;q^4)_k}{\left(q^{4 }; q^{4}\right)_k^2(aq^4;q^4)_k(q^4/a;q^4)_k}q^{2k} \equiv \frac{\left(q^{2} ; q^{4}\right)_{(n-1)/{4}}}{\left(q^{4} ; q^{4}\right)_{(n-1)/4}}[n] q^{(1-n) / 4} \label{eq;thm3_1}.
\end{equation}

\end{theorem}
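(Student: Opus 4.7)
My plan follows the creative microscoping template of Guo and Zudilin: the target modulus $[n](1-aq^n)(a-q^n)$ factors into the coprime pieces $[n]$ and $(1-aq^n)(a-q^n)$ in the polynomial ring $\mathbb{Z}[q,q^{-1}][a]$, so it suffices to verify the congruence modulo each factor separately and combine via the Chinese remainder theorem.

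To treat the modulus $(1-aq^n)(a-q^n)$, I would evaluate the left-hand side at its two roots $a=q^{\pm n}$. Setting $a=q^{-n}$, the factor $(aq;q^4)_k=(q^{1-n};q^4)_k$ vanishes for every $k>(n-1)/4$, so the series becomes a terminating $_6\phi_5$. Recasting it in base $q^4$ with parameters $(A,B,C,D)=(q,\,q,\,q^{1-n},\,q^{1+n})$, one checks that the balancing condition $Aq^4/(BCD)=q^2$ matches the factor $q^{2k}$ and that the six shifted factorials align with the summand; Rogers' identity then evaluates the sum in closed form. The resulting product of $q$-Pochhammer symbols must then be matched to the claimed right-hand side, for which the key identities are $(q^5;q^4)_{(n-1)/4}/(q;q^4)_{(n-1)/4}=[n]$ together with a careful reversal of the products $(q^{3-n};q^4)_{(n-1)/4}$ and $(q^{4-n};q^4)_{(n-1)/4}$, which after extracting signs and powers of $q$ should produce the factor $q^{(1-n)/4}(q^2;q^4)_{(n-1)/4}/(q;q^4)_{(n-1)/4}$. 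The value at $a=q^n$ is then immediate, since the left-hand side is manifestly invariant under $a\leftrightarrow a^{-1}$ (the pairs $(aq;q^4)_k\leftrightarrow(q/a;q^4)_k$ and $(aq^4;q^4)_k\leftrightarrow(q^4/a;q^4)_k$ simply swap).

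For the modulus $[n]=\prod_{m\mid n,\,m>1}\Phi_m(q)$, I need to show the left-hand side is $\equiv 0\pmod{\Phi_m(q)}$ for each divisor $m>1$ of $n$ (the right-hand side already vanishes modulo each such $\Phi_m(q)$ thanks to its prefactor $[n]$). Since $n$ is odd, so is every such $m$, so $\gcd(4,m)=1$ and Lemma \ref{lem:two} applies with $d=4$, $t=1$, yielding $\sum_{k=0}^{m-1}T_k\equiv 0\pmod{\Phi_m(q)}$, where $T_k$ denotes the $k$-th summand of Theorem \ref{thm:3}. To bridge this length-$m$ sum with the theorem's truncation at $k=(n-1)/4$, I will argue term-by-term that each $T_k$ with $k$ outside the ``pairing range'' $\{0,\dots,s\}$ (where $s$ is the unique element of $\{0,\dots,m-1\}$ with $4s+1\equiv 0\pmod m$) is itself congruent to $0$ modulo $\Phi_m(q)$: the numerator $(q;q^4)_k^2$ acquires the factor $(1-q^{4s+1})^2\equiv 0\pmod{\Phi_m(q)}$ at $j=s,\,s+m,\dots$, which for $k$ in the relevant range is not offset by corresponding factors of $\Phi_m(q)$ appearing in the denominator $(q^4;q^4)_k^2$.

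The main obstacle, in my estimation, lies in this last step: the cyclotomic bookkeeping must be executed with care, since whether $(n-1)/4$ is smaller or larger than $m-1$ depends on the divisor under consideration, and for $k\ge m$ one has to track the multiplicities of $\Phi_m(q)$ through several length-$m$ cycles in both $(q;q^4)_k$ and $(q^4;q^4)_k$ simultaneously. By contrast, the terminating $_6\phi_5$ evaluation in the specialization step is essentially mechanical once the base-$q^4$ parameter assignment $(A,B,C,D)=(q,q,q^{1-n},q^{1+n})$ is identified, and the $a\leftrightarrow a^{-1}$ symmetry argument disposing of $a=q^n$ is immediate.
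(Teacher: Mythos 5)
Your overall architecture coincides with the paper's: split the modulus into the coprime pieces $(1-aq^n)(a-q^n)$ and $[n]$, evaluate at $a=q^{\pm n}$ via Rogers' $_6\phi_5$ summation (your parameter choice is the paper's $q\mapsto q^4$, $a=d=q$, $b=q^{1-n}$, $c=q^{1+n}$), and reduce the $[n]$ part to vanishing modulo $\Phi_m(q)$ for every divisor $m>1$ of $n$. The genuine gap is exactly at the step you flagged, and your proposed fix does not work: it is false that every term $T_k$ with $k$ outside the pairing range $\{0,\dots,s\}$ is individually $\equiv 0\pmod{\Phi_m(q)}$. This is true only for $s<k\le m-1$; once $k\ge m$ (which necessarily happens, since the truncation point $(n-1)/4$ exceeds $m-1$ for every proper divisor $m<n$), the zeros in the denominator do offset those in the numerator. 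Writing $k=um+v$ with $u\ge 1$ and $0\le v\le m-1$, the multiplicity of $\Phi_m(q)$ in $(q;q^4)_k^2$ is $2(u+1)$ if $v>s$ but only $2u$ if $v\le s$, while its multiplicity in $(q^4;q^4)_k^2$ is exactly $2u$; the factors $[8k+1]$, $(aq;q^4)_k(q/a;q^4)_k$ and $(aq^4;q^4)_k(q^4/a;q^4)_k$ contribute nothing generically. Hence for $v\le s$ the term $T_k$ has a finite, generically nonzero value at a primitive $m$-th root of unity. Concretely, for $n=25$ and $m=5$ one has $s=1$ and the truncation is at $k=6$: the terms $k=5$ and $k=6$ are not divisible by $\Phi_5(q)$, so your term-by-term bridge collapses there.

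What is needed instead (and what the paper does) is the quasi-periodicity of the summand at a root of unity $\zeta$ of order $m$: with $c_q(k)$ the $k$-th term one has $\lim_{q\to\zeta}c_q(lm+k)/c_q(lm)=c_\zeta(k)$, so the truncated sum splits into complete blocks, each equal to $c_\zeta(lm)\sum_{k=0}^{m-1}c_\zeta(k)$, plus one final partial block equal to $c_\zeta\big((n-4s-1)/4\big)\sum_{k=0}^{s}c_\zeta(k)$. Both inner sums vanish: the full sum by Lemma \ref{lem:two}, and the partial sum $\sum_{k=0}^{s}c_\zeta(k)$ by the same pairing argument, which is precisely \eqref{eq;lemma2_2} (when $s$ is even the middle term carries the factor $[ds+t]\equiv 0$). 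Thus the whole truncated sum, not its individual tail terms, is divisible by $\Phi_m(q)$; multiplying over all divisors $m>1$ of $n$ gives the congruence modulo $[n]$. With this replacement your outline (the Chinese remainder combination, the Rogers evaluation, and the $a\leftrightarrow a^{-1}$ symmetry disposing of $a=q^{n}$) goes through exactly as in the paper.
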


\begin{proof}
For  $a=q^{n}$ or $a=q^{-n}$, the left-hand side of $(\ref{eq;thm3_1})$ is equal to
\begin{align*}
\sum_{k=0}^{(n-1)/4}[8k+1] \frac{\left(q ; q^{4}\right)_{k}^2(q^{1+n};q^4)_k(q^{1-n};q^4)_k}{\left(q^{4 }; q^{4}\right)_k^2(q^{4+n};q^4)_k(q^{4-n};q^4)_k}q^{2k},
\end{align*}
which by Rogers' summation (\ref{eq:6phi5}) with the parameter substitutions $q \mapsto q^{4}$, $a=d=q$, $b=q^{1-n}$ and $c=q^{1+n}$ can be written as
\begin{align}
&{}_{6}\phi_5\!\left[\begin{array}{cccccc}
q,& q^{\frac{9}{2}},& -q^{\frac{9}{2}}, & q^{1-n},    & q^{1+n},    & q \\
& q^{\frac{1}{2}}, & -q^{\frac{1}{2}},  & q^{4+n}, & q^{4-n}, & q^{4}
\end{array};q^4,\,q^{2}
\right]\notag \\
&=\frac{\left(q^{5}, q^{3}, q^{3-n}, q^{3+n} ; q^{4}\right)_{\infty}}{\left(q^{4-n},  q^{4+n}, q^{4} , q^{2} ; q^{4}\right)_{\infty}} \notag \\
&=\frac{\left(q^{2} ; q^{4}\right)_{(n-1)/{4}}}{\left(q^{4} ; q^{4}\right)_{(n-1)/4}}[n] q^{(1-n) / 4}.
\end{align}
This means that the $q$-congruence (\ref{eq;thm3_1}) holds modulo $1-aq^n$ and $a-q^n$.

In what follows we shall prove
\begin{equation}
\sum_{k=0}^{(n-1)/4}[8k+1] \frac{\left(q ; q^{4}\right)_{k}^2(aq;q^4)_k(q/a;q^4)_k}{\left(q^{4 }; q^{4}\right)_k^2(aq^4;q^4)_k(q^4/a;q^4)_k}q^{2k} \equiv 0 \pmod{[n]} .
\end{equation}
Let $\zeta \not=1 $ be an $n$-th unity root, not necessarily primitive. Then $\zeta$ must be a primitive  $m_1$-th root of unity with $m_1 | n$. Since  $\gcd(m_1,4)=1$,
there exists a unique integer $s_1$ with $0< s_1 \le m_1-1$ and $4s_1\equiv -1 \pmod {m_1}$. Let $c_q(k)$ denote the $k$-th term on the left-hand side in \eqref{eq;thm3_1}, i.e,
\begin{equation*}
c_q(k)=[8k+1]\frac{\left(q ; q^{4}\right)_{k}^2(aq;q^4)_k(q/a;q^4)_k}{\left(q^{4 }; q^{4}\right)_k^2(aq^4;q^4)_k(q^4/a;q^4)_k}q^{2k}.
\end{equation*}
Letting $d=4$, $t=1$, $m=m_1$ in (\ref{eq;lemma2_1}) and combining (\ref{eq;lemma2_2}), we have
\begin{align*}
 \sum_{k=0}^{m_{1}-1} c_{\zeta}(k)=\sum_{k=0}^{s_1} c_{\zeta}(k)=0.
\end{align*}
For $0\le k \le m_1-1$, the following  limit holds:
\begin{equation*}
\lim _{q \rightarrow \zeta} \frac{c_{q}\left(l m_{1}+k\right)}{c_{q}\left(l m_{1}\right)}=c_{\zeta}(k).
\end{equation*}
Then, we obtain
\begin{align}
\begin{aligned}
\sum_{k=0}^{\frac{n-1}{4}} c_{\zeta}(k) = \sum_{l=0}^{\frac{n-4 s_{1}-1}{ 4m_{1}}-1} c_{\zeta}\left(l m_{1}\right) \sum_{k=0}^{m_{1}-1} c_{\zeta}(k)
+c_{\zeta}\left(\left(n-4 s_{1}-1\right) / 4\right) \sum_{k=0}^{s_{1}} c_{\zeta}(k)=0;
\end{aligned}
\end{align}
\begin{align}
\sum_{k=0}^{n-1} c_{\zeta}(k)= \sum_{l=0}^{n / m_{1}-1} c_{\zeta}\left(l m_{1}\right) \sum_{k=0}^{m_{1}-1} c_{\zeta}(k)=0.
\end{align}
It follows that
\begin{align*}
\sum_{k=0}^{M}[8k+1] \frac{\left(q ; q^{4}\right)_{k}^2(aq;q^4)_k(q/a;q^4)_k}{\left(q^{4 }; q^{4}\right)_k^2(aq^4;q^4)_k(q^4/a;q^4)_k}q^{2k} \equiv 0 \pmod{\Phi_{m_1}(q)} ,
\end{align*}
where $M=(n-1)/4$ or $n-1$.
Noting that
\[\prod_{m_1 | n, m_1>1} \Phi_{m_1}(q)=[n],\]
we immediately get
\begin{align}
\sum_{k=0}^{M}[8k+1] \frac{\left(q ; q^{4}\right)_{k}^2(aq;q^4)_k(q/a;q^4)_k}{\left(q^{4 }; q^{4}\right)_k^2(aq^4;q^4)_k(q^4/a;q^4)_k}q^{2k} \equiv 0 \pmod{[n]}\label{eq;integer}.
\end{align}
Since $[n]$, $a - q^n$ and $1 - aq^n$ are pairwise relatively prime polynomials, we complete the proof of the theorem.
\end{proof}
\begin{proof}[Proof of Theorem 1.]
 For $k$ in the range $0\le k \le (n-1)/4$, since $\gcd(n,4)=1$, the numbers $4,8 \cdots 4(n-1)$ are all not divisible by $n$. So that the limit $a \rightarrow 1$
 of the denominator related to $a$ in $(\ref{eq;thm3_1})$  is relatively prime to $\Phi_{n}(q)$. On the other hand, the limit $(1-aq^n)(a-q^n)$ as  $a \rightarrow 1$
 contains the factor $\Phi_{n}(q)^2$. Thus, letting $a\rightarrow 1$ in (\ref{eq;thm3_1}), we conclude that (\ref{eq;thm1_1}) is true modulo $\Phi_{n}(q)^3$.
 Setting $a \rightarrow 1$ in \eqref{eq;integer}, we get
\begin{align}
\sum_{k=0}^{M}[8k+1] \frac{\left(q ; q^{4}\right)_{k}^4}{\left(q^{4 }; q^{4}\right)_k^4}q^{2k} \equiv 0 \pmod{[n]}\label{eq;thm3_2},
\end{align}
which means that (\ref{eq;thm1_1}) also holds modulo $[n]$. Since the least common multiple of $[n]$ and $\Phi_{n}(q)^3$ is $[n]\Phi_{n}(q)^2$, we obtain (\ref{eq;thm1_1}).
Moreover, in view of $(q;q^4)_k^4/(q^4;q^4)_k^4\equiv 0 \pmod {\Phi_{n}(q)^4}$ for $(n-1)/4 <k \le n-1$, we arrive at (\ref{eq;thm1_2}). This completes the proof.
\end{proof}

\section{Proof of Theorem 2 }\label{sec:thm1}
In this section, we need Watson's  $_8\phi_7$ transformation formula (cf.\
\cite[Appendix~(II.17)]{GR})
\begin{align}
& _{8}\phi_{7}\!\left[\begin{array}{cccccccc}
a,& qa^{\frac{1}{2}},& -qa^{\frac{1}{2}}, & b,    & c,    & d,    & e,    & f \\
& a^{\frac{1}{2}}, & -a^{\frac{1}{2}},  & aq/b, & aq/c, & aq/d, & aq/e, & aq/f
\end{array};q,\, \frac{a^2q^{2}}{bcdef}
\right] \notag\\[5pt]
&\quad =\frac{(aq, aq/de,aq/df,aq/ef;q)_\infty}
{(aq/d, aq/e,aq/f,aq/def;q)_\infty}
\,{}_{4}\phi_{3}\!\left[\begin{array}{c}
aq/bc,\ d,\ e,\ f \\
aq/b,\, aq/c,\, def/a
\end{array};q,\, q
\right]\label{eq:8phi7}
\end{align}
to accomplish our proof. Moreover, we require the following lemma.
\begin{lemma}\label{lem:three}
Let $m>1$, $d\ge 2$, $t$ be integers with $\gcd(d,m)=1$  and $\gcd(d,t)=1$. Then
\begin{equation}
\sum_{k=0}^{m-1}[2d k+t]_{q^{2}}[2dk+t]^{2} \frac{\left(q^{2t} ; q^{2d}\right)_{k}^{2}(aq^{2t};q^{2d})_{k}(q^{2t}/a;q^{2d})_k}{\left(q^{2d} ; q^{2d}\right)_{k}^{2}(aq^{2d};q^{2d})_{k}(q^{2d}/a;q^{2d})_k} q^{-4t k}
\equiv 0 \pmod{\Phi_m(q^2)}\label{eq;Lemma3_1}.
\end{equation}
\end{lemma}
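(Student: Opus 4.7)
The plan is to mirror the proof of Lemma 2 with the substitution $q\mapsto q^2$ throughout, while carrying the extra factor $[2dk+t]^2$ (which lives in the base $q$, not $q^2$) through the computation. Denote by $c_k$ the $k$-th summand on the left-hand side. Since $\gcd(d,m)=1$, there is a unique $s$ with $0\le s\le m-1$ and $ds\equiv -t\pmod m$, so $q^{2(ds+t)}\equiv 1\pmod{\Phi_m(q^2)}$. I will prove that $c_{s-k}\equiv -c_k\pmod{\Phi_m(q^2)}$ and then pair terms.

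To establish the reflection, I would invoke Lemma~\ref{lem:one} (with $q\mapsto q^2$) three times: once with parameter $1$ to handle $(q^{2t};q^{2d})_k^2/(q^{2d};q^{2d})_k^2$, once with parameter $a$ to handle $(aq^{2t};q^{2d})_k/(q^{2d}/a;q^{2d})_k$, and once with parameter $1/a$ to handle $(q^{2t}/a;q^{2d})_k/(aq^{2d};q^{2d})_k$. The $(-a)^{s-2k}$ contributions from the last two are reciprocal and cancel, and the $q$-exponents collect into $q^{4s(ds-d+2t)+8(d-t)k}$. Next, using $q^{2ds}\equiv q^{-2t}$, one checks
$$[2d(s-k)+t]_{q^2}\equiv -q^{-2t-4dk}[2dk+t]_{q^2},\quad [2d(s-k)+t]\equiv -q^{-t-2dk}[2dk+t],$$
so that the $q$-integer pieces contribute $-q^{-4t-8dk}$ to $c_{s-k}/c_k$; the final piece $q^{-4t(s-k)}/q^{-4tk}$ contributes $q^{-4ts+8tk}$. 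Multiplying everything together, the $q$-exponent telescopes to $4(s-1)(ds+t)$, which is an integer multiple of $4m$, hence becomes $1$ modulo $\Phi_m(q^2)$. Only the single $-$ survives, giving $c_{s-k}\equiv -c_k\pmod{\Phi_m(q^2)}$.

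The remaining bookkeeping is standard. If $s$ is odd, $\sum_{k=0}^{s}c_k\equiv 0$ by pairing $k$ with $s-k$. If $s$ is even, the middle term $c_{s/2}$ contains the factor $[ds+t]_{q^2}$; since $ds+t$ is a nonzero multiple of $m$, this factor vanishes modulo $\Phi_m(q^2)$, so the pairing still yields zero. Finally, for $s<k\le m-1$, the product $(q^{2t};q^{2d})_k$ contains $1-q^{2(t+ds)}$, which is divisible by $\Phi_m(q^2)$, whereas $(q^{2d};q^{2d})_k$ remains coprime to $\Phi_m(q^2)$ because $\gcd(d,m)=1$; therefore $c_k\equiv 0\pmod{\Phi_m(q^2)}$ for those indices, and the full sum reduces to $\sum_{k=0}^{s}c_k\equiv 0$. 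The main obstacle — and really the only nontrivial point — is the exponent bookkeeping in the second step: one must track contributions from four $q$-shifted-factorial ratios, a $[2dk+t]_{q^2}$, a squared $[2dk+t]^2$, and the $q^{-4tk}$ factor, and verify that after applying $q^{2ds}\equiv q^{-2t}$ the aggregated exponent is an integer multiple of $2m$.
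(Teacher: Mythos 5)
Your proposal is correct and follows the same route as the paper: the paper's proof also applies Lemma~\ref{lem:one} with $q\mapsto q^2$ and repeats the $k\leftrightarrow s-k$ pairing argument of Lemma~\ref{lem:two}, handling the even-$s$ case via the factor $[ds+t]_{q^2}\equiv 0\pmod{\Phi_m(q^2)}$ and the range $s<k\le m-1$ via $(q^{2t};q^{2d})_k/(q^{2d};q^{2d})_k\equiv 0\pmod{\Phi_m(q^2)}$. Your explicit exponent bookkeeping (total exponent $4(s-1)(ds+t)$, a multiple of $2m$) correctly fills in the details that the paper leaves to the reader with ``similarly as the proof of Lemma 2.''
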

\begin{proof}
Setting $q\mapsto q^2$ in (\ref{eq;Lemma1}), we get
\begin{align}\label{eq;Lemma3_2}
\frac{\left(a q^{ 2t} ; q^{ 2d}\right)_{s-k}}{\left(q^{ 2d} / a ; q^{ 2d}\right)_{s-k}}
\equiv(-a)^{s-2 k} q^{s(d s-d+2 t)+2(d-t)k} \frac{\left(a q^{2t} ; q^{ 2d}\right)_{k}}{\left(q^{2 d} / a ; q^{2d}\right)_{k}} \quad\left(\bmod \Phi_{m}\left(q^2\right)\right),
\end{align}
where $0\le s\le m-1$ and $ds\equiv -t \pmod m $. Similarly as the proof of Lemma \ref{lem:two}, by (\ref{eq;Lemma3_2}), we can see that the sum of the $k$-th and $(s-k)$-th terms on the left-hand side of (\ref{eq;Lemma3_1}) are congruent to zero modulo $\Phi_{m}(q^2)$ when $k\neq s/2$.
So the following $q$-congruence is true when $s$ is odd:
\begin{align}
\sum_{k=0}^{s}[2d k+t]_{q^{2}}[2dk+t]^{2} \frac{\left(q^{2t} ; q^{2d}\right)_{k}^{2}(aq^{2t};q^{2d})_{k}(q^{2t}/a;q^{2d})_k}{\left(q^{2d} ; q^{2d}\right)_{k}^{2}(aq^{2d};q^{2d})_{k}(q^{2d}/a;q^{2d})_k} q^{-4t k}\equiv 0 \pmod {\Phi_{m}(q^2)}. \label{eq;Lemma3_3}
\end{align}
On the other hand, if $s$ is even, then $[2d(\frac{s}{2})+t]_{q^2}=[ds+t]_{q^2} \equiv 0 \pmod {\Phi_m(q^2)}$.
This means that \eqref{eq;Lemma3_3} holds for any arbitrary integer $0\le s \le m-1$.
Since $(q^{2t};q^{2d})_k/(q^{2d};q^{2d})_k\equiv 0 \pmod {\Phi_{m}(q^2)}$ for $s< k \le m-1$, we immediately arrive at (\ref{eq;Lemma3_1}).
\end{proof}

In order to prove Theorem \ref{thm:2}, we also need to establish the following parametric generalization.
\begin{theorem}\label{thm:4}
Let $n\equiv 1\pmod {4}$ be a positive integer. Then, for any indeterminate $a$, modulo $[n]_{q^2}(1-aq^{2n})(a-q^{2n})$, we have
\begin{align}
&\sum_{k=0}^{(n-1) / 4}[8 k+1]_{q^{2}}[8k+1]^{2} \frac{\left(q^{2} ; q^{8}\right)_{k}^{2}(aq^{2};q^{8})_{k}(q^{2}/a;q^{8})_k}{\left(q^{8} ; q^{8}\right)_{k}^{2}(aq^{8};q^{8})_{k}(q^{8}/a;q^{8})_k} q^{-4 k}\notag \\
&\equiv [n]_{q^{2}}\frac{(q^{4};q^{8})_{(n-1)/4}}{(q^{8};q^{8})_{(n-1)/4}} q^{-(n-1)/2}\left(1-\frac{\left(1-a q^{2 }\right)\left(1-q^{2 } / a\right)}{\left(1-q\right)^{2}(1+ q^{2 })}\right)\label{eq;thm4_1}.
\end{align}
\end{theorem}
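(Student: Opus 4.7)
The plan is to mirror the proof of Theorem \ref{thm:3}, using Watson's \({}_{8}\phi_{7}\) transformation \eqref{eq:8phi7} in place of Rogers' \({}_{6}\phi_{5}\) summation. The argument splits into two parts: (a) establishing the congruence modulo \((1-aq^{2n})(a-q^{2n})\) by a closed-form evaluation of the left-hand side at the distinguished specializations \(a=q^{\pm 2n}\); and (b) establishing divisibility of the left-hand side (for arbitrary \(a\)) by \([n]_{q^{2}}\), via Lemma \ref{lem:three}. Since the polynomials \([n]_{q^{2}}\), \(1-aq^{2n}\) and \(a-q^{2n}\) are pairwise coprime in \(\mathbb{Z}[a,q]\), combining (a) and (b) yields the congruence modulo \([n]_{q^{2}}(1-aq^{2n})(a-q^{2n})\).

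For part (a), the algebraic identity \([8k+1]_{q^{2}}[8k+1]^{2}=[8k+1]^{3}(1+q^{8k+1})/(1+q)\) guides the recognition of the sum at \(a=q^{\pm 2n}\) as a terminating very-well-poised \({}_{8}\phi_{7}\) series. After the substitution \(q\mapsto q^{8}\) with Watson's VWP parameter specialized to \(q^{2}\) and the five remaining parameters chosen so that (i) the \(a\)-dependent factors \((aq^{2};q^{8})_{k}(q^{2}/a;q^{8})_{k}/((aq^{8};q^{8})_{k}(q^{8}/a;q^{8})_{k})\) of the theorem appear, (ii) a second copy of \((q^{2};q^{8})_{k}/(q^{8};q^{8})_{k}\) appears, and (iii) the extra factor \([8k+1]^{2}\) emerges from the remaining two numerator--denominator pairs, the argument of the \({}_{8}\phi_{7}\) becomes \(q^{-4}\). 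Watson's transformation \eqref{eq:8phi7} then rewrites the sum as a product of \(q\)-shifted factorials multiplied by a terminating \({}_{4}\phi_{3}\). The prefactor should contribute \([n]_{q^{2}}(q^{4};q^{8})_{(n-1)/4}/(q^{8};q^{8})_{(n-1)/4}\cdot q^{-(n-1)/2}\) on the right-hand side of \eqref{eq;thm4_1}, while the residual \({}_{4}\phi_{3}\) must be evaluated, either by a Saalschützian-type identity or by reduction to a short explicit sum, to the bracket \(1-(1-aq^{2})(1-q^{2}/a)/((1-q)^{2}(1+q^{2}))\) after specialization to \(a=q^{\pm 2n}\).

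For part (b), we argue exactly as in the proof of Theorem \ref{thm:3}. For each divisor \(m_{1}>1\) of \(n\), let \(\zeta\) be a primitive \(m_{1}\)-th root of unity and let \(c_{q}(k)\) denote the \(k\)-th summand on the left-hand side of \eqref{eq;thm4_1}. Since \(\gcd(m_{1},4)=1\), there is a unique \(s_{1}\) with \(0\le s_{1}\le m_{1}-1\) and \(4s_{1}\equiv -1\pmod{m_{1}}\). Lemma \ref{lem:three} (applied with \(d=4\), \(t=1\), \(m=m_{1}\)) yields \(\sum_{k=0}^{s_{1}}c_{\zeta}(k)=0\), and the block decomposition used in the proof of Theorem \ref{thm:3} (based on the identity \(\lim_{q\to\zeta}c_{q}(lm_{1}+k)/c_{q}(lm_{1})=c_{\zeta}(k)\)) extends this to \(\sum_{k=0}^{(n-1)/4}c_{\zeta}(k)=0\). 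Ranging over all divisors \(m_{1}>1\) of \(n\) and invoking \([n]_{q^{2}}=\prod_{m_{1}\mid n,\,m_{1}>1}\Phi_{m_{1}}(q^{2})\), we obtain divisibility of the left-hand side by \([n]_{q^{2}}\).

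The principal obstacle lies in part (a): no single classical closed-form summation produces the full right-hand side of \eqref{eq;thm4_1}. Identifying parameters in Watson's transformation that simultaneously generate the entire summand on the left and reduce the subsequent \({}_{4}\phi_{3}\) to the correction factor \(1-(1-aq^{2})(1-q^{2}/a)/((1-q)^{2}(1+q^{2}))\) is the most delicate step; the natural candidate with repeated numerator parameter is likely to produce a \({}_{4}\phi_{3}\) with near-singular denominator factor \((q^{8-2n};q^{8})_{k}\) that must be treated carefully as a limit of a regular \({}_{4}\phi_{3}\), or alternatively evaluated through a contiguous relation that reduces it to a short sum matching the bracket exactly.
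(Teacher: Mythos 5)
Your two-part strategy (evaluation at $a=q^{\pm 2n}$ via Watson, plus a root-of-unity argument via Lemma \ref{lem:three} for the modulus $[n]_{q^2}$) is exactly the paper's, but as written both parts have gaps, one of them logical. In part (b) you evaluate the partial sums only at primitive $m_1$-th roots of unity $\zeta$ and then invoke $[n]_{q^2}=\prod_{m_1\mid n,\,m_1>1}\Phi_{m_1}(q^2)$. Vanishing at all such $\zeta$ only gives divisibility by $\prod_{m_1\mid n,\,m_1>1}\Phi_{m_1}(q)=[n]_q$, which is strictly weaker: for odd $m_1$ one has $\Phi_{m_1}(q^2)=\Phi_{m_1}(q)\Phi_{2m_1}(q)$ and $\Phi_{2m_1}(q)=\Phi_{m_1}(-q)$, so divisibility by $\Phi_{m_1}(q^2)$ also requires vanishing at $-\zeta$. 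This is not automatic, because the summand (through $[8k+1]^2$) is not a function of $q^2$. The paper closes exactly this point: since Lemma \ref{lem:three} is a congruence modulo $\Phi_{m}(q^2)$ and both $\eta$ and $-\eta$ are roots of $\Phi_{m_2}(q^2)$, the same block-decomposition argument is run at $q\to\eta$ \emph{and} $q\to-\eta$, giving divisibility by $\Phi_{m_2}(q)\Phi_{m_2}(-q)$ for every $m_2\mid n$, $m_2>1$, and then $\prod_{m_2\mid n,\,m_2>1}\bigl(\Phi_{m_2}(q)\Phi_{m_2}(-q)\bigr)=[n]_{q^2}$. You need to add the evaluation at $-\zeta$; the fix is short but without it your conclusion does not follow.

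In part (a) you leave the decisive ingredient unresolved, and your stated worry points away from the actual mechanism. The correct specialization in \eqref{eq:8phi7} is $q\mapsto q^{8}$, $a=d=q^{2}$, $b=c=q^{9}$, $e=q^{2+2n}$, $f=q^{2-2n}$ (so the argument is $a^2q^2/bcdef=q^{-4}$): the pair $b=c=q^{9}$ against the lower parameters $aq/b=aq/c=q$ produces $[8k+1]^2$, the well-poised pair produces $[8k+1]_{q^2}$, and $a,d,e,f$ produce the remaining $q$-shifted factorials of \eqref{eq;thm4_1} at $a=q^{\pm 2n}$. No Saalsch\"utz-type summation or contiguous relation is needed: the resulting ${}_{4}\phi_{3}$ has top parameter $aq/bc=q^{-8}$ (in base $q^{8}$), hence terminates after its $k=1$ term, and the two surviving terms are precisely $1-(1-q^{2+2n})(1-q^{2-2n})/\bigl((1-q)^{2}(1+q^{2})\bigr)$, i.e.\ the bracket in \eqref{eq;thm4_1}. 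Moreover the near-singular factors you fear, $(q^{8\pm 2n};q^{8})$, do not enter the ${}_{4}\phi_{3}$ at all (its lower parameters are $q,q,q^{4}$); they sit in the infinite-product prefactor, where they combine into the finite quotient $[n]_{q^{2}}(q^{4};q^{8})_{(n-1)/4}\,q^{-(n-1)/2}/(q^{8};q^{8})_{(n-1)/4}$. With these two repairs your argument becomes the paper's proof.
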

\begin{proof}
For $a=q^{2n}$ or $a=q^{-2n}$, the left-hand side of (\ref{eq;thm4_1}) is equal to
\begin{align}
&\sum_{k=0}^{(n-1) / 4}[8 k+1]_{q^{2}}[8k+1]^{2} \frac{\left(q^{2} ; q^{8}\right)_{k}^{2}(q^{2+2n};q^{8})_{k}(q^{2-2n};q^{8})_k}{\left(q^{8} ; q^{8}\right)_{k}^{2}(q^{8+2n};q^{8})_{k}(q^{8-2n};q^{8})_k} q^{-4 k}\notag \\
&=  {_{8}\phi_7} \!\left[\begin{array}{cccccccc}
q^{2},& q^{9},& -q^{9}, & q^{9},    & q^{9},    & q^{2},    & q^{2-2n},    & q^{2+2n} \\
& q, & -q,  & q, & q, & q^{8}, & q^{8-2n}, & q^{8+2n}
\end{array};q^{8},\, q^{-4}
\right], \notag\\[5pt]\label{eq;thm4_2}
\end{align}
where the $_8\phi_7$ series can be evaluated by Watson's  $_8\phi_7$ transformation \eqref{eq:8phi7} with the parameter substitutions
$q \mapsto q^{8}$, $a=d=q^{2}$, $b=c=q^{9}$, $e=q^{2+2n}$ and $f=q^{2-2n}$ as follows:
\begin{align}
&\frac{(q^{10}, q^{6}, q^{6-2n},q^{6+2n};q^{8})_\infty}
{(q^{8}, q^{4}, q^{8-2n}, q^{8+2n};q^{8})_\infty}
\,{}_{4}\phi_{3}\!\left[\begin{array}{c}
    q^{-8},\ q^{2},\ q^{2+2n},\ q^{2-2n} \\
    q,\, q,\, q^{4}
\end{array};q^{8},\, q^{8}
\right] \notag\\[2mm]
&=[n]_{q^{2}}\frac{(q^{4};q^{8})_{(n-1)/4}}{(q^{8};q^{8})_{(n-1)/4}} q^{-(n-1)/2}\left(1-\frac{\left(1- q^{2 +2n}\right)\left(1-q^{2 -2n}  \right)}{\left(1-q\right)^{2}(1+ q^{2 })}\right).
\end{align}
This means that the $q$-congruence (\ref{eq;thm4_1}) modulo $(1-aq^{2n})(a-q^{2n})$ holds true. Moreover, 
for $n>1$, let $\eta \not= 1$ be an $n$-th unity root, not necessarily primitive. Then $\eta$ must be a primitive $m_2$-th root of unity with $m_2 | n$. Owing to $\gcd(m_2,4)=1$, there exists a unique integer $s_2$ with $0< s_2 \le m_2-1$ and $4s_2\equiv -1 \pmod {m_2}$. Setting $d=4$, $t=1$, $s=s_2$, $m=m_2$ in (\ref{eq;Lemma3_1}) and (\ref{eq;Lemma3_3}) we have
\begin{align*}
\sum_{k=0}^{m_{2}-1} p_{\eta}(k)=\sum_{k=0}^{s_2} p_{\eta}(k)=0 \quad \text{and}\quad \sum_{k=0}^{m_{2}-1} p_{-\eta}(k)=\sum_{k=0}^{s_2} p_{-\eta}(k)=0,
\end{align*}
where $p_{q}(k)$ denotes the $k$-th term on the left-hand side of (\ref{eq;thm4_1}). Also, we can calculate that
\begin{align*}
\lim _{q \rightarrow \eta} \frac{p_{q}\left(l m_{2}+k\right)}{p_{q}\left(l m_{2}\right)}=p_{\eta}(k).
\end{align*}
Likewise, we get the following result
\begin{align*}
&\sum_{k=0}^{(n-1) / 4} p_{\eta}(k)=\sum_{\ell=0}^{\frac{n-4 s_{2}-1}{4 m_{2}}-1} p_{\eta}(\ell m_2) \sum_{k=0}^{m_2-1} p_{\eta}(k)+p_{\eta}((n-4s_2-1) /4 )\sum_{k=0}^{s_2} p_{\eta}(k)=0,\\
&\sum_{k=0}^{n-1} p_{\eta}(k)=\sum_{\ell=0}^{n / m_2-1} \sum_{k=0}^{m_2-1} p_{\eta}(\ell m_2+k)=\sum_{\ell=0}^{n / m_2-1} p_{\eta}(\ell m_2) \sum_{k=0}^{m_2-1} p_{\eta}(k)=0,
\end{align*}
which means that $\Phi_{m_2}(q) $ divides the sums $\sum_{k=0}^{(n-t)/d}p_q(k)$ and $\sum_{k=0}^{n-1}p_q(k)$. Similarly, the two sums are also divisible by $\Phi_{m_2}(-q)$, By the relation
\begin{equation*}
\prod_{m_2 | n, m_2>1}\left(\Phi_{m_2}(q) \Phi_{m_2}(-q)\right)=[n]_{q^{2}},
\end{equation*}
we obtain
\begin{align}
\sum_{k=0}^{M}[8k+1]_{q^{2}}[8k+1]^{2} \frac{\left(q^{2} ; q^{8}\right)_{k}^{2}(aq^{2};q^{8})_{k}(q^{2}/a;q^{8})_k}{\left(q^{8}; q^{8}\right)_{k}^{2}(aq^{8};q^{8})_{k}(q^{8}/a;q^{8})_k} q^{-4 k}
\equiv 0 \pmod{[n]_{q^2}} \label{eq;thm3_3},
\end{align}
where $M=(n-1)/4$ or $n-1$. Since $[n]_{q^2}$, $a - q^{2n}$ and $1 - aq^{2n}$ are pairwise relatively prime polynomials, we complete the proof of the theorem.
\end{proof}

\begin{proof}[Proof of Theorem \ref{thm:2}.]
 As same as the proof of Theorem \ref{thm:1}, letting $a \rightarrow 1$ in \eqref{eq;thm4_1},
we can see that the denominator of (\ref{eq;thm4_1}) is relatively prime to  $\Phi_{n}(q^2)$. On the other hand, $\Phi_{n}(q^2)^2$ is the factor of the
limit of $(1-aq^{2n})(a-q^{2n})$ as $a \to 1$. Thus, we get that (\ref{eq;thm2_1}) holds modulo $\Phi_{n}(q^2)^3$. Meanwhile, letting $a\to 1$ in (\ref{eq;thm3_3}),
we see that (\ref{eq;thm2_1}) is also true modulo $[n]_{q^2}$. Hence, the $q$-supercongruence (\ref{eq;thm2_1}) holds true.
Furthermore, for $(n-1)/4 <k \le n-1$, $(q^2;q^8)_k^4/(q^8;q^8)_k^4\equiv 0 \pmod {\Phi_{n}(q^2)^4}$, we get (\ref{eq;thm2_2}).
\end{proof}
\section{Generalizations of Theorems \ref{thm:1} and \ref{thm:2}}
In this section, we first give a generalization of Theorem \ref{thm:1} as follows.
 \begin{theorem}\label{thm:5}
    Let $n>1$, $d\ge 2$ , $t$ be  integers with $\gcd(t,d)=1$ and $n\equiv t \pmod {d}$ such that $n+d-nd\le t\le n$. We have
    \begin{align}
    &\sum_{k=0}^{(n-t)/d}[2dk+t] \frac{\left(q^t ; q^{d}\right)_{k}^4}{\left(q^{d }; q^{d}\right)_k^4}q^{(d-2t)k} \equiv \frac{\left(q^{2t} ; q^{d}\right)_{(n-t)/{d}}}{\left(q^{d} ; q^{d}\right)_{(n-t)/d}}[n] q^{t(t-n) / d} \quad\left(\bmod [n]\Phi_{n}(q)^{2}\right)\label{eq;thm5_1};\\
    &\sum_{k=0}^{n-1}[2dk+t] \frac{\left(q^t ; q^{d}\right)_{k}^4}{\left(q^{d }; q^{d}\right)_k^4}q^{(d-2t)k} \equiv \frac{\left(q^{2t} ; q^{d}\right)_{(n-t)/{d}}}{\left(q^{d} ; q^{d}\right)_{(n-t)/d}}[n] q^{t(t-n) / d} \quad\left(\bmod [n]\Phi_{n}(q)^{2}\right)\label{eq;thm5_2}.
    \end{align}
\end{theorem}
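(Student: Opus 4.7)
The plan is to follow the creative microscoping strategy used in the proof of Theorem \ref{thm:3}. First, I would establish a parametric refinement: for an extra indeterminate $a$, show that
\begin{equation*}
S_n(a):=\sum_{k=0}^{(n-t)/d}[2dk+t]\frac{(q^t;q^d)_k^2(aq^t;q^d)_k(q^t/a;q^d)_k}{(q^d;q^d)_k^2(aq^d;q^d)_k(q^d/a;q^d)_k}q^{(d-2t)k}\equiv R_n(q)\pmod{[n](1-aq^n)(a-q^n)},
\end{equation*}
where $R_n(q):=\frac{(q^{2t};q^d)_{(n-t)/d}}{(q^d;q^d)_{(n-t)/d}}[n]\,q^{t(t-n)/d}$. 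Since $[n]$, $1-aq^n$, and $a-q^n$ are pairwise coprime polynomials, the factors $[n]$ and $(1-aq^n)(a-q^n)$ can be treated separately, and \eqref{eq;thm5_1} then follows by the same $a\to 1$ limit argument used to deduce Theorem \ref{thm:1} from Theorem \ref{thm:3}.

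For the congruence modulo $(1-aq^n)(a-q^n)$, specialise $a=q^n$ (and, symmetrically, $a=q^{-n}$). The sum collapses to a terminating very-well-poised ${}_{6}\phi_{5}$ to which Rogers' summation \eqref{eq:6phi5} applies with the parameter choice $q\mapsto q^d$, $a=d=q^t$ (in the notation of \eqref{eq:6phi5}), $b=q^{t-n}$, $c=q^{t+n}$, so that the argument $aq/bcd$ becomes $q^{d-2t}$ and matches the exponent in $S_n(q^n)$. The resulting closed form is
\begin{equation*}
[t]\cdot\frac{(q^{t+d},q^{d-t},q^{d-t+n},q^{d-t-n};q^d)_\infty}{(q^{d+n},q^{d-n},q^d,q^{d-2t};q^d)_\infty}.
\end{equation*}
Because $n\equiv t\pmod d$, the four numerator and four denominator exponents form the same multiset $\{t,-t,0,-2t\}$ modulo $d$, so pairing them via $(q^a;q^d)_\infty/(q^{a+jd};q^d)_\infty=(q^a;q^d)_j$ with $j=(n-t)/d$ reduces the quotient to a finite one. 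Reversing $(q^{d-t-n};q^d)_j$ and $(q^{d-n};q^d)_j$ through $1-q^{-m}=-q^{-m}(1-q^m)$ turns them into $(q^{2t};q^d)_j$ and $(q^t;q^d)_j$ up to explicit signs and $q$-powers; a final simplification $(q^{t+d};q^d)_j/(q^t;q^d)_j=[n]/[t]$, together with the prefactor $[t]$, collapses everything to $R_n(q)$. The bound $n+d-nd\le t\le n$ is exactly what guarantees that $(n-t)/d$ is a nonnegative integer and that no zero factor appears during this reversal.

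For the $[n]$-divisibility, apply Lemma \ref{lem:two} once for each divisor $m_1>1$ of $n$. Since $n\equiv t\pmod d$ and $\gcd(t,d)=1$ force $\gcd(n,d)=1$, every such $m_1$ satisfies $\gcd(m_1,d)=1$. Let $\zeta$ be a primitive $m_1$-th root of unity, let $s_1\in\{0,\dots,m_1-1\}$ be the unique integer with $ds_1\equiv-t\pmod{m_1}$, and let $c_q(k)$ denote the $k$-th summand of $S_n(a)$. Lemma \ref{lem:two} gives $\sum_{k=0}^{m_1-1}c_\zeta(k)=\sum_{k=0}^{s_1}c_\zeta(k)=0$, and together with the multiplicativity $c_\zeta(\ell m_1+k)=c_\zeta(\ell m_1)c_\zeta(k)$, the standard chunking argument — partitioning $\{0,\dots,M\}$ (with $M=(n-t)/d$ or $M=n-1$) into complete blocks of length $m_1$ plus a tail of length $s_1+1$ — yields $\Phi_{m_1}(q)\mid S_n(a)$. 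Multiplying over all $m_1>1$ dividing $n$ gives $[n]\mid S_n(a)$.

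Letting $a\to 1$ in the parametric congruence, the denominators $(q^d;q^d)_k^2$ of $S_n(1)$ are coprime to $\Phi_n(q)$ for $0\le k\le(n-t)/d$ (since $\gcd(n,d)=1$), so no pole arises; meanwhile $(1-aq^n)(a-q^n)$ acquires the factor $\Phi_n(q)^2$ in the limit. Combined with $[n]\mid S_n(a)$ and $\operatorname{lcm}([n],\Phi_n(q)^3)=[n]\Phi_n(q)^2$, this yields \eqref{eq;thm5_1}; the second congruence \eqref{eq;thm5_2} then follows because $(q^t;q^d)_k^4/(q^d;q^d)_k^4\equiv 0\pmod{\Phi_n(q)^4}$ for $(n-t)/d<k\le n-1$. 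The main obstacle is the ${}_{6}\phi_{5}$ evaluation in the second paragraph: the four-fold cancellation of infinite products requires careful tracking of signs and $q$-powers in the general $(d,t)$ setting, and the two-sided hypothesis $n+d-nd\le t\le n$ is exactly what keeps this bookkeeping consistent.
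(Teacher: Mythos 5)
Your proposal is correct and follows essentially the same route as the paper: the same parametric congruence modulo $[n](1-aq^n)(a-q^n)$, the same Rogers' ${}_6\phi_5$ specialization $q\mapsto q^d$, $a=d=q^t$, $b=q^{t-n}$, $c=q^{t+n}$, the same root-of-unity chunking via Lemma \ref{lem:two} for the $[n]$ part, and the same $a\to1$ limit together with the tail-vanishing observation for \eqref{eq;thm5_2}. The only cosmetic difference is that $\lim_{q\to\zeta}c_q(\ell m_1+k)/c_q(\ell m_1)$ equals $c_\zeta(k)/[t]_\zeta$ rather than $c_\zeta(k)$ when $t\neq1$ (the paper carries this $1/[t]_\zeta$ normalization explicitly), which does not affect the vanishing of the block sums.
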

It is obvious that Theorem \ref{thm:1} is just the special case with $d=4$ and $t=1$ in Theorem \ref{thm:5}.
Letting $d=2$ and $t=1$ in (\ref{eq;thm5_1}), we immediately get
\begin{equation}
\sum_{k=0}^{(n-1) / 2}[4k+1] \frac{\left(q ; q^{2}\right)_{k}^4}{\left(q^2 ; q^{2}\right)_k^4} \equiv[n] q^{(1-n) / 2} \quad\left(\bmod [n]\Phi_{n}(q)^{2}\right),
\end{equation}
which is a $q$-analogue of Van Hamme's (C.2) and has been proved by Guo and Wang  \cite{GW}.
\begin{proof}
As same as the proof of Theorem \ref{thm:1},
we shall first establish the following parametric generalization of (\ref{eq;thm5_1}):
\begin{align}
&\sum_{k=0}^{(n-t)/d}[2dk+t] \frac{\left(q^t ; q^{d}\right)_{k}^2(aq^t;q^d)_k(q^t/a;q^d)_k}{\left(q^{d }; q^{d}\right)_k^2(aq^d;q^d)_k(q^d/a;q^d)_k}q^{(d-2t)k}\notag \\&\equiv \frac{\left(q^{2t} ; q^{d}\right)_{(n-t)/{d}}}{\left(q^{d} ; q^{d}\right)_{(n-t)/d}}[n] q^{t(t-n) / d} \label{eq;thm5_3}   \pmod{[n](1-aq^n)(a-q^n)}.
\end{align}
At first, the $q$-congruence (\ref{eq;thm5_3}) modulo $(1-aq^n)$ and $(a-q^n)$ follows from the summation
\begin{align}
\sum_{k=0}^{(n-t)/d}[2dk+t] \frac{\left(q^t ; q^{d}\right)_{k}^2(q^{t+n};q^d)_k(q^{t-n};q^d)_k}{\left(q^{d }; q^{d}\right)_k^2(q^{d+n};q^d)_k(q^{d-n};q^d)_k}q^{(d-2t)k}
=\frac{\left(q^{2t} ; q^{d}\right)_{(n-t)/{d}}}{\left(q^{d} ; q^{d}\right)_{(n-t)/d}}[n] q^{t(t-n) / d},
\end{align}
which is the specialization $q \mapsto q^{d}$, $a=d=q^t$, $b=q^{t-n}$ and $c=q^{t+n}$ in Rogers' nonterminating $_6\phi_5$ summation (\ref{eq:6phi5}). On the other hand, let $c_q(k)$ denotes the $k$-th term on the left-hand side of (\ref{eq;thm5_3}). Similarly to the proof of Theorem \ref{thm:3}, we can further show that
\begin{align}
\begin{aligned}
\sum_{k=0}^{\frac{n-t}{d}} c_{\zeta}(k) =\frac{1}{[t]_{\zeta}} \sum_{l=0}^{\frac{n-d s_{1}-t}{d m_{1}}-1} c_{\zeta}\left(l m_{1}\right) \sum_{k=0}^{m_{1}-1} c_{\zeta}(k)
+\frac{1}{[t]_{\zeta}} c_{\zeta}\left(\left(n-d s_{1}-t\right) / d\right) \sum_{k=0}^{s_{1}} c_{\zeta}(k)=0;
\end{aligned}
\end{align}
\begin{align}
\sum_{k=0}^{n-1} c_{\zeta}(k)=\frac{1}{[t]_\zeta} \sum_{l=0}^{n / m_{1}-1} c_{\zeta}\left(l m_{1}\right) \sum_{k=0}^{m_{1}-1} c_{\zeta}(k)=0,
\end{align}
where $\zeta \not=1 $ is a root of $\Phi_{m_1}(q)$ with $m_1 |n$, integer $s_1$ satisfies  $0\le s_1 \le m_1-1$ and $ds_1\equiv -t \pmod {m_1}$.
Then the truth of (\ref{eq;thm5_3}) modulo $[n]$ can be proved as same as the proof of (\ref{eq;integer}).
Thus we prove that (\ref{eq;thm5_3}) module $[n](1-aq^n)(a-q^n)$ is true. The $q$-supercongruences (\ref{eq;thm5_1}) and (\ref{eq;thm5_2}) then follow by letting $a\rightarrow 1$ in (\ref{eq;thm5_3}) and the fact that $(q^t;q^d)_k^4/(q^d;q^d)_k^4\equiv 0 \pmod {\Phi_{n}(q)^4}$ for $(n-t)/d <k \le n-1$.
This completes the proof.
\end{proof}

We also have the following generalization of Theorem \ref{thm:2}.

\begin{theorem}\label{thm:6}
Let $n>1$, $d\ge 2$, $t$ be  integers with $\gcd(t,d)=1$ and $n\equiv t \pmod {d}$ such that $n+d-nd\le t\le n$. Then, modulo $ [n]_{q^2} \Phi_{n}(q^2)^2$,
\begin{align}
\sum_{k=0}^{(n-t) / d}[2d k+t]_{q^{2}}[2dk+t]^{2} \frac{\left(q^{2t} ; q^{2d}\right)_{k}^{4}}{\left(q^{2d} ; q^{2d}\right)_{k}^{4}} q^{-4t k}
&\equiv \frac{-2[t]^2[n]_{q^{2}}(q^{4t};q^{2d})_{(n-t)/d}}{(1+q^{2t})(q^{2d};q^{2d})_{(n-t)/d}} q^{t-2t(n-t)/d},\label{eq;thm6_1} \\
\sum_{k=0}^{n-1}[2d k+t]_{q^{2}}[2dk+t]^{2} \frac{\left(q^{2t} ; q^{2d}\right)_{k}^{4}}{\left(q^{2d} ; q^{2d}\right)_{k}^{4}} q^{-4t k}
&\equiv \frac{-2[t]^2[n]_{q^{2}}(q^{4t};q^{2d})_{(n-t)/d}}{(1+q^{2t})(q^{2d};q^{2d})_{(n-t)/d}} q^{t-2t(n-t)/d}.\label{eq;thm6_2}
\end{align}
\end{theorem}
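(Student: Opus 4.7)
The plan is to mimic the proof of Theorem \ref{thm:2}: first introduce a parameter $a$, establish a stronger $q$-congruence modulo $[n]_{q^2}(1-aq^{2n})(a-q^{2n})$, and then recover Theorem \ref{thm:6} by taking the limit $a\to 1$. The natural parametric lift replaces two of the four copies of $(q^{2t};q^{2d})_k/(q^{2d};q^{2d})_k$ in the summand by $(aq^{2t};q^{2d})_k(q^{2t}/a;q^{2d})_k/((aq^{2d};q^{2d})_k(q^{2d}/a;q^{2d})_k)$, exactly as \eqref{eq;thm4_1} generalises \eqref{eq;thm2_1}.

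To establish the congruence modulo $(1-aq^{2n})(a-q^{2n})$, specialise to $a=q^{\pm 2n}$. The sum terminates at $k=(n-t)/d$ and becomes a terminating ${}_8\phi_7$ series, which I would evaluate via Watson's transformation \eqref{eq:8phi7} with $q\mapsto q^{2d}$, Watson's parameters $a$ and $d$ both set to $q^{2t}$, $b=c=q^{2d+t}$, $e=q^{2t+2n}$, $f=q^{2t-2n}$. The $b=c$ specialisation contributes the extra $[2dk+t]^2/[t]^2$ factor needed to match the $[2dk+t]^2$ in the summand, which accounts for the $[t]^2$ appearing on the right-hand side. Watson's transformation then converts the ${}_8\phi_7$ into a ${}_4\phi_3$ that is essentially a two-term sum and evaluates in closed form, yielding a parametric right-hand side whose $a=1$ specialisation reproduces the right-hand side of \eqref{eq;thm6_1}.

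For the congruence modulo $[n]_{q^2}$, I would exploit the factorisation
\[ [n]_{q^2} = \prod_{\substack{m_2\mid n\\ m_2>1}} \Phi_{m_2}(q)\,\Phi_{m_2}(-q). \]
Let $\eta$ be a primitive $m_2$-th root of unity with $m_2\mid n$. Since $\gcd(d,m_2)=1$, there is a unique $s_2$ with $0\le s_2\le m_2-1$ and $ds_2\equiv -t\pmod{m_2}$. Lemma \ref{lem:three} applied with $m=m_2$, together with the vanishing $(q^{2t};q^{2d})_k/(q^{2d};q^{2d})_k\equiv 0\pmod{\Phi_{m_2}(q^2)}$ for $s_2<k\le m_2-1$, yields $\sum_{k=0}^{s_2}p_\eta(k)=\sum_{k=0}^{m_2-1}p_\eta(k)=0$, where $p_q(k)$ denotes the $k$-th summand. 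The standard block-splitting trick based on $\lim_{q\to\eta}p_q(\ell m_2+k)/p_q(\ell m_2)=p_\eta(k)$ then shows that both $\sum_{k=0}^{(n-t)/d}p_q(k)$ and $\sum_{k=0}^{n-1}p_q(k)$ vanish at $q=\eta$. The same argument at roots of $\Phi_{m_2}(-q)$ yields divisibility by $\Phi_{m_2}(-q)$, and multiplying over all $m_2>1$ gives divisibility by $[n]_{q^2}$.

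Letting $a\to 1$, the factor $(1-aq^{2n})(a-q^{2n})$ limits to $-(1-q^{2n})^2$ and contributes $\Phi_n(q^2)^2$, while $(aq^{2d};q^{2d})_k(q^{2d}/a;q^{2d})_k\to(q^{2d};q^{2d})_k^2$ remains coprime to $\Phi_n(q^2)$ (because $\gcd(d,n)=1$ prevents $q^{2dj}\equiv 1\pmod{\Phi_n(q^2)}$ for $1\le j\le(n-t)/d$). Combining the resulting congruence modulo $\Phi_n(q^2)^3$ with the congruence modulo $[n]_{q^2}$ yields \eqref{eq;thm6_1} modulo their least common multiple $[n]_{q^2}\Phi_n(q^2)^2$. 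Congruence \eqref{eq;thm6_2} is then immediate from \eqref{eq;thm6_1} since $(q^{2t};q^{2d})_k^4/(q^{2d};q^{2d})_k^4\equiv 0\pmod{\Phi_n(q^2)^4}$ for $(n-t)/d<k\le n-1$.

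The main obstacle is the closed-form evaluation in step~2: although Watson's transformation reduces the terminating sum to a ${}_4\phi_3$ that is essentially two terms, one must carefully extract the parametric prefactor so that its $a=1$ specialisation matches the precise coefficient $-2[t]^2 q^{t-2t(n-t)/d}/(1+q^{2t})$ appearing in \eqref{eq;thm6_1}. Tracking the auxiliary $[t]^2$ arising from the $b=c$ degeneration and the power of $q$ produced by the balance of shifts in $q^{2d+t}, q^{2t\pm 2n}$ is where the calculation is most delicate; once that closed form is pinned down, the remainder of the argument is a routine adaptation of the proof of Theorem \ref{thm:2}.
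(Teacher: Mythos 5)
Your proposal matches the paper's own proof of Theorem \ref{thm:6}: the same parametric lift, the same specialisation of Watson's transformation \eqref{eq:8phi7} with $q\mapsto q^{2d}$, $a=d=q^{2t}$, $b=c=q^{2d+t}$, $e=q^{2t+2n}$, $f=q^{2t-2n}$, the same root-of-unity argument via Lemma \ref{lem:three} for divisibility by $[n]_{q^2}$, and the same limit $a\to1$; the delicate closed form you flag is exactly the paper's \eqref{eq;thm6_3}, whose $a=1$ specialisation gives the factor $-2q^{t}/(1+q^{2t})$ as required. The approach and all key steps are correct and essentially identical to the paper's.
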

Obviously, the $d=4$ and $t=1$ case of this theorem reduces to Theorem \ref{thm:2}. Furthermore, letting $d=2$ and $t=1$, we get
\begin{align*}
\sum_{k=0}^{(n-1) / 2}[4 k+1]_{q^{2}}[4 k+1]^{2} \frac{\left(q^{2} ; q^{4}\right)_{k}^{4}}{\left(q^{4} ; q^{4}\right)_{k}^{4}} q^{-4 k} \equiv-[n]_{q^{2}} \frac{2 q^{2-n}}{1+q^{2}} \quad\left(\bmod [n]_{q^{2}} \Phi_{n}\left(q^{2}\right)^{2}\right),
\end{align*}
which is a $q$-analogue of (C.2) supercongruence of Van Hamme and was already obtained by Guo\cite{Guo-t}.
\begin{proof}
Letting  $q \mapsto q^{2d}$, $a=d=q^{2t}$, $b=c=q^{2d+t}$, $e=q^{2t+2n}$ and $f=q^{2t-2n}$ in  Watson's  $_8\phi_7$ transformation (\ref{eq:8phi7}),
we can prove that, modulo $(a-q^{2n})$ and $(1-aq^{2n})$,
\begin{align}
&\sum_{k=0}^{(n-t) / d}[2d k+t]_{q^{2}}[2dk+t]^{2} \frac{\left(q^{2t} ; q^{2d}\right)_{k}^{2}(aq^{2t};q^{2d})_{k}(q^{2t}/a;q^{2d})_k}{\left(q^{2d} ; q^{2d}\right)_{k}^{2}(aq^{2d};q^{2d})_{k}(q^{2d}/a;q^{2d})_k} q^{-4t k}\notag \\
&\equiv [t]^2[n]_{q^{2}}\frac{(q^{4t};q^{2d})_{(n-t)/d}}{(q^{2d};q^{2d})_{(n-t)/d}} q^{-2t(n-t)/d}\left(1-\frac{\left(1-a q^{2 t}\right)\left(1-q^{2 t} / a\right)}{\left(1-q^{t}\right)^{2}(1+ q^{2 t})}\right)\label{eq;thm6_3}.
\end{align}
In the same manner as the proof of Theorem \ref{thm:3}, we can show that
\begin{align*}
\lim_{q \rightarrow \eta}\sum_{k=0}^{(n-t)/d} p_{q}(k)=\lim_{q \rightarrow \eta}\sum_{k=0}^{n-1} p_{q}(k)=0,
\end{align*}
where $p_q(k)$ is the  $k$-th term on the left-hand side of (\ref{eq;thm6_3}) and $\eta \not= \pm 1$ is a root of $\Phi_{m_3}(q^2)$ with $m_3|n$ and $m_3\geq1$.
This proves that (\ref{eq;thm6_3}) is true modulo $[n]_{q^2}(a-q^{2n})(1-aq^{2n})$. The rest of the proof
is similar to that of Theorem \ref{thm:2} and is omitted here.
\end{proof}
\section{A Conjecture about Swisher's (G.3)}
In the last part of Swisher's \cite{Swisher} paper, he conjectured a series of general congruences about Van Hamme's first 12 supercongruences, which are deemed to Dwork-type congruences, such as (G.3), for $p\equiv 1 \pmod 4$,
\begin{align}
\sum_{k=0}^{(p^r-1)/4}(8k+1)\frac{(\frac{1}{4})_k^4}{k!^4}
\equiv -(-1)^{\frac{p^2-1}{8}}p\Gamma_{p}(\frac{1}{2})\Gamma_{p}(\frac{1}{4})^2
 \sum_{k=0}^{(p^{r-1}-1)/4}(8k+1)\frac{(\frac{1}{4})_k^4}{k!^4} \pmod{p^{4r}} \label{eq;Swisher}.
\end{align}

Note that $ (-1)^{\frac{p^2-1}{8}}=(-1)^{\frac{p-1}{4}}$  and $\Gamma_{p}(\frac{1}{4})\Gamma_{p}(\frac{3}{4})=-(-1)^{\frac{p-1}{4}}$ for $p\equiv 1 \pmod 4$, the right-side hand of (\ref{eq;Swisher}) can be written as
\begin{align*}
 p\frac{\Gamma_p(\frac 12)\Gamma_p(\frac 14)}{\Gamma_p(\frac 34)}
 \sum_{k=0}^{(p^{r-1}-1)/2}(8k+1)\frac{(\frac{1}{4})_k^4}{k!^4} .
\end{align*}

Not long ago, Guo\cite{Guo-jmaa} and Zudilin\cite{Guo21} proved a number of Dwork-type supercongruences, including (B.3) and some special cases of (C.3), (E.3) and (F.3) in \cite{Swisher}, by constructing suitable $q$-analogues. We now propose the partial $q$-analogues of (G.3). 
It should be pointed out that the machinery in \cite{Guo-jmaa,Guo21} does not work for these q-congruences.
\begin{conjecture}
Let $r>1$, $n>1$ be integers with $n\equiv 1 \pmod 4$. Then, modulo $[n^r]\prod_{j=1}^{r}\Phi_{n^j}(q)^2$, we have
\begin{align}
\sum_{k=0}^{(n^r-1) / 4}[8k+1] \frac{\left(q ; q^{4}\right)_{k}^4}{\left(q^4 ; q^{4}\right)_k^4}q^{2k} &\equiv \frac{\left(q^{2} ; \label{con-1} q^{4}\right)_{(n^r-1)/{4}}}{\left(q^{4} ; q^{4}\right)_{(n^r-1)/4}}\frac{ \left(q^{4n} ; q^{4n}\right)_{(n^{r-1}-1)/{4}} }{\left(q^{2n} ; q^{4n}\right)_{(n^{r-1}-1)/{4}}}[n] q^{(1-n) / 4}\notag \\
&\times \sum_{k=0}^{(n^{r-1}-1) / 4}[8k+1]_{q^n} \frac{\left(q^n ; q^{4n}\right)_{k}^4}{\left(q^{4n} ; q^{4n}\right)_k^4}q^{2nk},\\
\sum_{k=0}^{n^r-1}[8k+1] \frac{\left(q ; q^{4}\right)_{k}^4}{\left(q^4 ; q^{4}\right)_k^4}q^{2k} &\equiv \frac{\left(q^{2} ; q^{4}\right)_{(n^r-1)/{4}}}{\left(q^{4} ; q^{4}\right)_{(n^r-1)/4}}\frac{ \left(q^{4n} ; q^{4n}\right)_{(n^{r-1}-1)/{4}} }{\left(q^{2n} ; q^{4n}\right)_{(n^{r-1}-1)/{4}}}[n] q^{(1-n) / 4}\notag \\
&\times \sum_{k=0}^{n^{r-1}-1}[8k+1]_{q^n} \frac{\left(q^n ; q^{4n}\right)_{k}^4}{\left(q^{4n} ; q^{4n}\right)_k^4}q^{2nk}.\label{G.3-2}
\end{align}
\end{conjecture}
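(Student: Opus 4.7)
The plan is to proceed by induction on $r$. The base case $r=1$ is precisely Theorem~\ref{thm:1}: the inner sum on the right-hand side of \eqref{con-1} then collapses to its $k=0$ term, the extra $q$-Pochhammer ratio $(q^{4n};q^{4n})_0/(q^{2n};q^{4n})_0$ equals $1$, and the modulus $[n^r]\prod_{j=1}^r\Phi_{n^j}(q)^2$ reduces to $[n]\Phi_n(q)^2$, so the statement matches exactly. For the inductive step from $r-1$ to $r$, I would lift the creative-microscoping argument of Theorem~\ref{thm:3}: introduce a parameter $a$ into the outer sum by replacing $(q;q^4)_k^2$ with $(aq;q^4)_k(q/a;q^4)_k$ and $(q^4;q^4)_k^2$ with $(aq^4;q^4)_k(q^4/a;q^4)_k$, and attempt to prove a parametric $q$-congruence whose $a\to 1$ limit delivers \eqref{con-1} and whose modulus contains the factor $(1-aq^{n^r})(a-q^{n^r})$, which degenerates to $\Phi_{n^r}(q)^2$.

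At $a=q^{\pm n^r}$, the parametric outer sum should admit, via a suitable $q$-hypergeometric transformation, a closed expression that factors as the stated product times a truncated sum in base $q^n$ up to index $(n^{r-1}-1)/4$. For the intermediate cyclotomic factors $\Phi_{n^j}(q)^2$ with $1\le j<r$, I would adapt the pairing argument of Lemma~\ref{lem:two}: at a primitive $n^j$-th root of unity the parametric outer sum breaks into blocks of length matching the order of the root, and the inductive hypothesis applied at level $r-1$ with base $q^n$ should match the remaining block against the inner sum in \eqref{con-1}. The divisibility by $[n^r]$ itself would follow, as in the proof of Theorem~\ref{thm:3}, from quadratic cancellation between the $k$-th and $(s-k)$-th terms at every primitive $m$-th root of unity for $m\mid n^r$, $m>1$. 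Pairwise coprimality of $[n^r]$, $\Phi_{n^j}(q)$ for $1\le j\le r$, $1-aq^{n^r}$, and $a-q^{n^r}$ would then combine the pieces; letting $a\to 1$ absorbs the two linear factors into $\Phi_{n^r}(q)^2$ and yields \eqref{con-1}. The companion \eqref{G.3-2} would follow immediately because $(q;q^4)_k^4/(q^4;q^4)_k^4\equiv 0\pmod{\Phi_{n^r}(q)^4}$ for $(n^r-1)/4<k\le n^r-1$.

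The hard part, as the authors themselves emphasize, is that the standard Guo--Zudilin machinery of \cite{Guo-jmaa,Guo21} does not extend here: Rogers' ${}_6\phi_5$ summation \eqref{eq:6phi5} used in Theorem~\ref{thm:3} produces only a single closed form at $a=q^{\pm n^r}$, not the desired product of a closed factor with a truncated sum at level $r-1$ in base $q^n$. A plausible new ingredient would be to introduce a \emph{second} microscopic parameter $b$ coupled to $q^{n^{r-1}}$, so that the doubly-parametrized sum could be evaluated at $(a,b)=(q^{\pm n^r},q^{\pm n^{r-1}})$ by an iterated application of Watson's ${}_8\phi_7$ transformation \eqref{eq:8phi7} or of Bailey's very-well-poised ${}_{10}\phi_9$ transformation. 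Controlling the interaction between $a$ and $b$ while simultaneously preserving the $[n^r]$ divisibility, and arranging for the right-hand side to take exactly the cascaded form in \eqref{con-1}, is where I expect the main technical obstacle to lie; genuine progress may require a new cyclotomic identity tailored to the tower of powers $n^j$ rather than a direct adaptation of the existing creative-microscoping arsenal.
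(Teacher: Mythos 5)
This statement is posed in the paper as an open conjecture -- the authors give no proof, and they explicitly remark that the creative-microscoping machinery of \cite{Guo-jmaa,Guo21} does not apply -- so your proposal cannot be checked against a paper proof; it has to stand on its own, and as it stands it is an outline with the decisive steps missing rather than a proof. The base case $r=1$ does reduce correctly to Theorem~\ref{thm:1}, but the inductive step is never carried out. As you yourself note, Rogers' ${}_6\phi_5$ summation \eqref{eq:6phi5} evaluated at $a=q^{\pm n^r}$ yields a single closed form, not the cascaded right-hand side of \eqref{con-1}, which is a closed factor multiplied by the truncated level-$(r-1)$ sum in base $q^n$. The proposed remedy -- a second microscopic parameter $b$ coupled to $q^{n^{r-1}}$ and an iterated ${}_8\phi_7$ \eqref{eq:8phi7} or ${}_{10}\phi_9$ evaluation -- is offered only as ``plausible''; no candidate identity is written down, let alone verified to produce the required factorization, so the core of the argument is absent.

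There is also a quantitative gap in the modulus bookkeeping. Since $[n^r]=\prod_{m\mid n^r,\,m>1}\Phi_m(q)$ already contains one copy of $\Phi_{n^j}(q)$ for every $1\le j\le r$, the asserted modulus $[n^r]\prod_{j=1}^r\Phi_{n^j}(q)^2$ forces the difference of the two sides to vanish to \emph{third} order at each $\Phi_{n^j}(q)$. The pairing argument of Lemma~\ref{lem:two} and the root-of-unity evaluation only ever give the first power of $\Phi_m(q)$, and the single parameter $a$ supplies the extra square only at the top level $j=r$, where $(1-aq^{n^r})(a-q^{n^r})\to\Phi_{n^r}(q)^2$ (times units). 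Your suggestion to ``apply the inductive hypothesis at level $r-1$'' at a primitive $n^j$-th root of unity does not bridge this: the inductive hypothesis is a polynomial congruence in $q$, not an evaluation statement at higher-order roots of unity, and nothing in the sketch explains how the intermediate factors $\Phi_{n^j}(q)^2$, $1\le j<r$, would be captured. These Dwork-type higher-power divisibilities at every level of the tower $n^j$ are precisely what the known $q$-microscope technology fails to deliver here, which is why the statement remains a conjecture; your plan correctly locates the obstacle but does not overcome it.
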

 Letting $n=p$ and $q\rightarrow 1$ in \eqref{con-1}, we immediately get
\begin{align*}
\sum_{k=0}^{(p^r-1) / 4}(8k+1) \frac{(\frac{1}{4})_k^4}{k!^4} &\equiv \frac{ (\frac{1}{2})_{(p^r-1)/4} (1)_{(p^{r-1}-1) / 4}     }{ (1)_{(p^{r}-1) / 4 }(\frac{1}{2})_{(p^{r-1}-1)/4} }\:p\notag  \sum_{k=0}^{(p^{r-1}-1)/4}(8k+1)\frac{(\frac{1}{4})_k^4}{k!^4} \pmod{3^r}.
 \end{align*}
In order to prove that  \eqref{con-1} is a direct $q$-analogue of (G.3) modulo $p^{3r}$, we only need to verify that
\begin{align*}
\frac{ (\frac{1}{2})_{(p^r-1)/4} (1)_{(p^{r-1}-1) / 4}}{ (1)_{(p^{r}-1) / 4 }(\frac{1}{2})_{(p^{r-1}-1)/4}}\equiv \frac{\Gamma_p(\frac 12)\Gamma_p(\frac 14)}{\Gamma_p(\frac 34)} \pmod{p^{2r}}.
\end{align*}
It is obvious that \eqref{G.3-2} is an equivalent form of \eqref{con-1}.

\end{document}